\documentclass[12pt]{article}

\usepackage[margin=1in]{geometry}
\usepackage{amsmath, amssymb}
\usepackage{amsthm}
\usepackage{booktabs}
\usepackage{algorithm}
\usepackage{algorithmic}
\usepackage{tikz}
\usetikzlibrary{shapes.geometric, arrows.meta, positioning, fit, backgrounds, shadows}
\usepackage{graphicx}
\usepackage[hidelinks]{hyperref}
\usepackage{cleveref}
\usepackage{fancyhdr}
\newtheorem{theorem}{Theorem}[section]
\newtheorem{lemma}[theorem]{Lemma}
\newtheorem{corollary}[theorem]{Corollary}
\newtheorem{assumption}{Assumption}[section]
\crefname{assumption}{Assumption}{Assumptions}
\Crefname{assumption}{Assumption}{Assumptions}
\theoremstyle{remark}
\newtheorem{remark}{Remark}[section]

\newcommand{\fsp}[1]{\mathcal{#1}}
\newcommand{\algo}[1]{\mbox{\textsf{#1}}}
\newcommand{\x}{\boldsymbol{x}}
\newcommand{\dd}{\boldsymbol{d}}
\newcommand{\g}{\boldsymbol{g}}
\newcommand{\G}{\boldsymbol{G}}
\newcommand{\p}{\boldsymbol{p}}
\newcommand{\Hmat}{\boldsymbol{H}}
\newcommand{\I}{\boldsymbol{I}}
\newcommand{\RR}{\mathbb{R}}
\newcommand{\email}[1]{\texttt{#1}}

\title{Parallel Model-Based Derivative-Free Optimization via Rank-Two KKT Updates}
\author{Donghan Wu%
\thanks{University of California, Berkeley (\email{wfwdh@berkeley.edu}).}
\and Pengcheng Xie%
\thanks{Lawrence Berkeley National Laboratory. Corresponding author (\email{pxie@lbl.gov}).}
}
\date{}

\begin{document}

\maketitle

\begin{abstract}
Derivative-free optimization (DFO) addresses unconstrained problems $\min_{\x\in\RR^n} f(\x)$ where $f$ is accessed only through a zeroth-order oracle. Model-based trust-region methods construct underdetermined quadratic interpolation models from $\mathcal{O}(n)$ points and solve a KKT system to determine model parameters, costing $\mathcal{O}(m^3)$ operations and limiting parallel scalability. It is shown that the KKT matrix for the minimum Frobenius norm updating model depends entirely on inner products of shifted coordinates. Reflecting the interpolation set across a single coordinate axis preserves these inner products and changes only one row and column of the KKT matrix, inducing a rank-at-most-two perturbation whose inverse update via the Sherman-Morrison-Woodbury formula costs $\mathcal{O}(n^2)$ when $m=\mathcal{O}(n)$. The reflection is an isometry in centered Euclidean trust regions and preserves the poisedness constant of the interpolation set; together with standard fully linear model-management assumptions this supports first-order global convergence. The mechanism is embedded in a master-worker parallel algorithm with a Truncated Conjugate Gradient subproblem solver. Numerical results on 530 benchmark problems compare performance against an established DFO solver.
\end{abstract}

\noindent\textbf{Keywords.} Derivative-free optimization, trust-region methods, parallel computing, quadratic interpolation, low-rank updates, global convergence.

\vspace{3pt}
\noindent\textbf{MSC codes.} 90C56, 90C30, 65K05, 90C90.

\section{Introduction}
\label{sec:intro}

\subsection{Background and Related Work}
In many engineering and scientific settings, objective functions are accessed only through a ``black box'' or zeroth-order oracle. These problems arise in chemical process modeling, complex physical simulations, and hyperparameter tuning, where derivatives are unavailable, noisy, or too expensive to compute. Such unconstrained problems, formulated as $\min_{\x\in\RR^n} f(\x)$, fall under Derivative-Free Optimization (DFO).

Model-based trust-region methods are among the most widely used DFO techniques. To keep evaluation costs manageable, underdetermined quadratic interpolation with $\mathcal{O}(n)$ points has become standard practice. Powell proposed the least Frobenius norm updating principle to ensure uniqueness: among all quadratic models interpolating the data, select the one whose Hessian deviates minimally from the previous model. This principle underpins well-known solvers like \algo{NEWUOA} and \algo{BOBYQA}.

Despite their success, several bottlenecks limit the scalability of traditional model-based DFO methods. First, updating the interpolation set requires solving a KKT system of size $\mathcal{O}(m \times m)$, costing $\mathcal{O}(m^3)$ floating-point operations. Second, while function evaluations can be distributed across workers, model construction typically runs sequentially on a single node, capping parallel speedup. Third, aggressively replacing interpolation points to probe new directions can degrade the poisedness of the set, leading to ill-conditioned KKT systems and inaccurate surrogate models.

In this paper, we develop a structured, parallelizable framework that addresses these bottlenecks. First, we prove that the KKT matrix for the least Frobenius norm updating model is governed by inner products of shifted coordinates. Second, we introduce a coordinate-flipping strategy and show that a single-axis reflection induces a rank-at-most-two perturbation; we give exact expressions for the perturbation vectors, enabling an $\mathcal{O}(n^2)$ update of the inverse matrix when $m=\mathcal{O}(n)$. Third, we show that the flipping operation preserves poisedness in centered Euclidean trust regions, and with standard fully linear model-management assumptions this supports first-order global convergence. Fourth, we design a multi-machine algorithm and evaluate it on standard DFO benchmark problems.

\label{sec:related}

In derivative-free optimization, trust-region frameworks driven by quadratic interpolation models have long held a dominant position. To avoid the $\mathcal{O}(n^2)$ function evaluations needed for fully determined quadratic models, Powell pioneered underdetermined quadratic models in algorithms such as \algo{NEWUOA} and \algo{BOBYQA} \cite{MJDP06}. Using only $\mathcal{O}(n)$ interpolation points, these methods cut evaluation costs substantially. Uniqueness is enforced via the minimum Frobenius norm updating principle, which minimizes the Hessian change between consecutive iterations.

Recent work has explored regularization mechanisms for interpolation models. Xie and Yuan \cite{xieyuannew} proposed a new underdetermined quadratic interpolation model with improved convergence on non-convex landscapes. Regardless of the regularization norm used, however, the model parameters are determined by a KKT system dominated by a dense geometric sub-matrix. This means maintaining polynomial interpolation models carries an $\mathcal{O}(m^3)$ algebraic overhead each time the sample set is updated.

As modern black-box problems grow in complexity, the cost of a single function evaluation has increased sharply. As noted in recent simulation-based design studies (e.g., Campos et al. \cite{campos2026parallel}), parallelization has become a practical requirement rather than an optional enhancement.

Traditional parallel DFO frameworks operate on a Master-Worker topology, distributing batch evaluations of the objective function. This architecture, however, masks a structural mismatch: while evaluations are decentralized, the maintenance of the quadratic model and the factorization of the KKT matrix remain centralized and sequential. As $n$ grows, the $\mathcal{O}(m^3)$ algebraic bottleneck on the master node quickly outweighs the time saved by parallel evaluations.

This tension becomes more acute as architectures evolve toward fully decentralized or multi-agent setups. In recent work on decentralized black-box optimization, Bergou et al. \cite{bergou2025direct} observed that the computational cost of maintaining and synchronizing quadratic models forces state-of-the-art decentralized frameworks to fall back on model-free direct-search methods. This trade-off---sacrificing the convergence speed of model-based methods for scalability---exposes a central limitation in contemporary parallel DFO.

To reduce this cost, lowering the complexity of model construction has drawn considerable attention. In serial algorithms, low-rank updates (e.g., the Sherman-Morrison-Woodbury formula) are applied to matrix factorizations when a single interpolation point is replaced. These mechanisms break down in highly concurrent settings, where multiple exploratory points change simultaneously and the perturbation degrades into a full matrix reconstruction.

Dimensionality reduction via subspaces offers a complementary path. Hare et al. \cite{hare2024expected} proved that exploring random subspaces in derivative-free algorithms guarantees an expected decrease in the objective function. Their result suggests that exploring low-dimensional random subspaces can circumvent full-space computational costs while retaining theoretical guarantees.

Synthesizing these observations, a concrete challenge for parallel DFO takes shape: \textit{Can one design a subspace exploration transformation that satisfies the expected decrease criteria of Hare et al. \cite{hare2024expected}, induces a low-rank algebraic update to bypass the $\mathcal{O}(n^3)$ bottleneck, and preserves the poisedness of the interpolation set?} The framework introduced in the following sections addresses this question.
\subsection{Preliminaries}
\label{sec:preliminaries}

In this section, we formally introduce the unconstrained black-box optimization problem, review the classical derivative-free trust-region framework, and establish the mathematical foundations for underdetermined quadratic interpolation models.

We consider the unconstrained nonlinear optimization problem of the form:
\begin{equation}
\min_{\x \in \RR^n} f(\x),
\end{equation}
where $f: \RR^n \to \RR$ is a computationally expensive black-box objective function. We assume that $f$ is deterministic and possesses a sufficient degree of smoothness (e.g., $f \in C^2$), but its analytical expression is inaccessible, and neither first-order gradients nor second-order Hessian matrices can be obtained directly or via automatic differentiation. Such problems frequently arise in engineering design, hyperparameter tuning, and simulation-based optimization, where each evaluation of $f(\x)$ may require hours of complex PDE solving or physical experiments.

To maintain numerical stability in floating-point arithmetic, derivative-free algorithms rarely evaluate polynomial bases using the absolute coordinates of $\x$. Instead, an arbitrary \textit{base point} $\x_0 \in \RR^n$ is introduced, and all calculations are performed relative to $\x_0$. Typically, $\x_0$ is chosen as the initial starting point or the current best iterate. This coordinate shifting strategy effectively prevents catastrophic cancellation errors when constructing the interpolation matrices, ensuring that the components of $(\x - \x_0)$ remain appropriately scaled.

Traditional line-search methods rely heavily on directional derivatives, making them less suitable for purely zeroth-order oracles. Consequently, modern model-based DFO solvers predominantly rely on the trust-region framework. At the $k$-th iteration, the algorithm maintains a current optimal iterate $\x_k$ and a trust-region radius $\Delta_k > 0$. A local surrogate model $Q_k(\x)$ is constructed to approximate the true objective function $f(\x)$ within the local neighborhood $\mathcal{B}(\x_k, \Delta_k) = \{\x \in \RR^n : \|\x - \x_k\| \le \Delta_k\}$.

To explore the search space, a trial step $\dd_k$ is computed by approximately solving the constrained trust-region subproblem:
\begin{equation}
\label{eq:tr_subproblem}
\min_{\dd \in \RR^n} Q_k(\x_k + \dd) \quad \text{s.t.} \quad \|\dd\| \le \Delta_k.
\end{equation}
The quality of the trial step $\dd_k$ and the accuracy of the surrogate model $Q_k$ are evaluated by the ratio of the actual reduction in the objective function to the predicted reduction in the model:
\begin{equation}
\rho_k = \frac{ared_k}{pred_k} = \frac{f(\x_k) - f(\x_k + \dd_k)}{Q_k(\x_k) - Q_k(\x_k + \dd_k)}.
\end{equation}
The ratio $\rho_k$ governs both the acceptance of the trial point and the update of the trust-region radius. Given parameters $0 < \eta_1 \le \eta_2 < 1$ and $0 < \gamma_{\text{dec}} < 1 < \gamma_{\text{inc}}$: a step with $\rho_k \ge \eta_2$ is highly successful---the trial point is accepted ($\x_{k+1} = \x_k + \dd_k$) and the trust-region radius is expanded or maintained ($\Delta_{k+1} = \gamma_{\text{inc}} \Delta_k$); a step with $\eta_1 \le \rho_k < \eta_2$ is successful---the trial point is accepted but the radius is kept unchanged ($\Delta_{k+1} = \Delta_k$); and a step with $\rho_k < \eta_1$ is unsuccessful---the trial point is rejected ($\x_{k+1} = \x_k$) and the trust region is contracted ($\Delta_{k+1} = \gamma_{\text{dec}} \Delta_k$).

To construct the surrogate model $Q_k(\x)$, we assume it takes the form of a multivariable quadratic polynomial:
\begin{equation}
\label{eq:quad_form}
Q_k(\x) = c + \g^\top (\x-\x_0) + \frac{1}{2}(\x-\x_0)^\top \G (\x-\x_0),
\end{equation}
where $c \in \RR$, $\g \in \RR^n$, and $\G \in \RR^{n \times n}$ is a symmetric matrix. The parameters $c, \g,$ and $\G$ encompass exactly $\frac{1}{2}(n+1)(n+2)$ degrees of freedom.

Let $\fsp{X}_k = \{\x_1, \dots, \x_m\}$ be the current set of distinct interpolation points. The model is forced to match the evaluated function values exactly at these points, leading to the interpolation conditions:
\begin{equation}
\label{eq:interpolation_cond}
Q_k(\x_i) = f(\x_i), \quad \text{for } i = 1, \dots, m.
\end{equation}
If $m = \frac{1}{2}(n+1)(n+2)$, the system is fully determined, and the quadratic model $Q_k(\x)$ is uniquely defined (provided the points in $\fsp{X}_k$ are poised). However, requiring $\mathcal{O}(n^2)$ function evaluations merely to construct a single model is prohibitively expensive for large $n$. 

To circumvent this evaluation bottleneck, modern DFO algorithms operate in the \textit{underdetermined} regime, utilizing significantly fewer points. In practice, the number of interpolation points is chosen such that:
\begin{equation}
n+2 \le m \le \frac{1}{2}(n+1)(n+2).
\end{equation}
A typical choice is $m = 2n+1$. Because $m$ is strictly less than the degrees of freedom of the quadratic model, the linear system defined by \cref{eq:interpolation_cond} possesses infinitely many solutions. Thus, to uniquely identify the model parameters $(\g, \G)$, additional mathematical principles or regularizations must be imposed. This underdetermined nature directly motivates the least Frobenius norm updating strategy.

\label{sec:least_norm}

\label{subsec:model_formulation}
As discussed in \cref{sec:preliminaries}, when the number of interpolation points $m$ satisfies $n+2 \le m \le \frac{1}{2}(n+1)(n+2)$, the interpolation system is underdetermined. To uniquely identify the quadratic model $Q_k$, we adopt Powell's minimum Frobenius norm updating principle. The underlying philosophy is that when updating the interpolation set locally, the true second-order curvature of the objective function rarely exhibits abrupt changes. Thus, the new model should absorb the new function evaluations while minimally altering the Hessian matrix of the previous model $Q_{k-1}$.

Mathematically, this is formulated as the following equality-constrained optimization problem:
\begin{equation}
\label{eq:leastfrob}
\begin{aligned}
\underset{Q}{\operatorname{\min}}\ &\frac{1}{2} \left\Vert\nabla^{2} Q-\nabla^{2} Q_{k-1}\right\Vert_{F}^{2} \\
\text{s.t.} \ \ & Q(\x_i)=f(\x_i), \quad i = 1, \dots, m.
\end{aligned}
\end{equation}
To simplify the algebraic resolution, we define the difference function $D(\x) = Q_{\text{new}}(\x) - Q_{\text{old}}(\x)$. Suppose a single old point $\x_t$ is replaced by a new evaluated point $\x_{\text{new}}$, the problem translates into finding $D(\x)$ such that:
\begin{equation}
\label{eq:miniFrob}
\begin{aligned}
\underset{D}{\operatorname{\min}} \  &\frac{1}{2} \left\Vert \nabla^2 D\right\Vert_F^2 \\ 
\text{s.t.} \ 
& D(\x_{i})=0, \quad \forall i \neq t, \\
& D(\x_{\text{new}})=f(\x_{\text{new}})-Q_{\text{old}}(\x_{\text{new}}).
\end{aligned}
\end{equation}
Let $r_i$ denote the residual on the right-hand side of the interpolation conditions. For any point in the updated set, $D(\x_i) = r_i$. We express the quadratic difference function as $D(\x) = c + \g^\top (\x-\x_0) + \frac{1}{2}(\x-\x_0)^\top \Hmat_D (\x-\x_0)$, where $\Hmat_D = \nabla^2 D$. To solve this, we construct the Lagrangian function:
\begin{equation}
\label{eq:lagrangian}
\mathcal{L}(\Hmat_D, \g, c, \boldsymbol{\lambda}) = \frac{1}{2} \|\Hmat_D\|_F^2 - 2\sum_{i=1}^m \lambda_i \left[ c + \g^\top (\x_i-\x_0) + \frac{1}{2}(\x_i-\x_0)^\top \Hmat_D (\x_i-\x_0) - r_i \right],
\end{equation}
where $\boldsymbol{\lambda} = (\lambda_1, \dots, \lambda_m)^\top \in \RR^m$ are the Lagrange multipliers associated with the interpolation conditions.

\label{subsec:kkt_structure}
The Lagrangian yields structural properties of the difference model. 

\begin{lemma}[Null-Space Constraints and Hessian Structure]
\label{lem:null_space}
For the minimum Frobenius norm updating model, the optimal Lagrange multipliers $\boldsymbol{\lambda}$ must reside within the left null space of the augmented coordinate matrix. Moreover, the optimal Hessian variation $\Hmat_D$ necessarily takes the form of a linear superposition of rank-one outer products.
\end{lemma}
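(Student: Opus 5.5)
The plan is to write down the first-order (KKT) stationarity conditions of the Lagrangian \cref{eq:lagrangian} with respect to each primal block $c$, $\g$ and $\Hmat_D$, and read off both claims directly. Since \cref{eq:miniFrob} is a convex quadratic objective in $\Hmat_D$ with affine equality constraints, any minimizer satisfies these conditions. A constraint qualification holds automatically because the constraints are linear. So no regularity assumption beyond feasibility is needed. Feasibility is guaranteed, for instance, when the shifted points are poised for linear interpolation, which is implied by $m\ge n+2$ together with standard poisedness.

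The first step is to differentiate with respect to $c$ and $\g$:
\begin{equation*}
\frac{\partial \mathcal{L}}{\partial c} = -2\sum_{i=1}^m \lambda_i = 0, \qquad
\nabla_{\g}\mathcal{L} = -2\sum_{i=1}^m \lambda_i (\x_i-\x_0) = \boldsymbol{0}.
\end{equation*}
Writing the augmented coordinate matrix as
\begin{equation*}
\boldsymbol{Y} = \begin{pmatrix} 1 & \cdots & 1 \\ \x_1-\x_0 & \cdots & \x_m-\x_0\end{pmatrix} \in \RR^{(n+1)\times m},
\end{equation*}
these $n+1$ equations read $\boldsymbol{Y}\boldsymbol{\lambda}=\boldsymbol{0}$. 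Equivalently, $\boldsymbol{\lambda}^\top \boldsymbol{Y}^\top=\boldsymbol{0}^\top$, so $\boldsymbol{\lambda}$ lies in the left null space of $\boldsymbol{Y}^\top$, the $m\times(n+1)$ matrix whose rows are $(1,(\x_i-\x_0)^\top)$. This is the first claim. The only care needed is to fix the orientation convention so that ``left null space'' matches the paper's later KKT block matrix.

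The second step is to differentiate with respect to $\Hmat_D$, treated as a symmetric matrix. Use $\frac{\partial}{\partial \Hmat}\tfrac12\|\Hmat\|_F^2=\Hmat$ and $\frac{\partial}{\partial \Hmat}\tfrac12 \boldsymbol{s}^\top\Hmat\boldsymbol{s}=\tfrac12\boldsymbol{s}\boldsymbol{s}^\top$. This gives
\begin{equation*}
\Hmat_D - \sum_{i=1}^m \lambda_i (\x_i-\x_0)(\x_i-\x_0)^\top = \boldsymbol{0},
\qquad\text{i.e.}\qquad
\Hmat_D=\sum_{i=1}^m \lambda_i (\x_i-\x_0)(\x_i-\x_0)^\top ,
\end{equation*}
which is the claimed superposition of rank-one outer products. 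The factor $2$ in the Lagrangian is chosen exactly so that the coefficients come out as $\lambda_i$.

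The main obstacle I expect is the symmetric-matrix differentiation. If $\Hmat_D$ is parametrized only by its independent entries, off-diagonal derivatives pick up factors of $2$ on both terms. I would avoid this by optimizing over all of $\RR^{n\times n}$ and observing two facts. First, each $(\x_i-\x_0)(\x_i-\x_0)^\top$ is symmetric, so the unconstrained stationary point is automatically symmetric. Second, the constraint terms depend only on the symmetric part of $\Hmat_D$, while the Frobenius norm is minimized by the symmetric part. Hence the restricted and unrestricted problems share the same minimizer.

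Finally, I would note that substituting this $\Hmat_D$ back into the interpolation conditions yields the entries $A_{ij}=\tfrac12((\x_i-\x_0)^\top(\x_j-\x_0))^2$. This sets up the inner-product structure of the KKT matrix used later, but it is not needed for the lemma itself.
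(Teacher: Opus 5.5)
Your proposal is correct and follows the paper's proof. Stationarity of the Lagrangian in $c$ and $\g$ gives $\sum_i\lambda_i=0$ and $\sum_i\lambda_i(\x_i-\x_0)=\boldsymbol{0}$ (i.e.\ $\boldsymbol{X}\boldsymbol{\lambda}=\boldsymbol{0}$), stationarity in $\Hmat_D$ gives $\Hmat_D=\sum_i\lambda_i(\x_i-\x_0)(\x_i-\x_0)^\top$, and your remarks on the linear constraint qualification, the symmetric-matrix derivative and the null-space orientation add rigor that the paper leaves implicit.

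One side claim is wrong, though it is harmless because the lemma presupposes that an optimum exists. Poisedness for linear interpolation does not guarantee feasibility when $m>n+1$. For example, take $n=2$, $m=5$, with four collinear points and one off the line: no quadratic fits generic data along the line. What feasibility actually needs is linear independence of the $m$ interpolation conditions on the space of quadratics, which holds, for instance, when the KKT matrix is nonsingular.
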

\begin{proof}
By applying the first-order optimality conditions to the Lagrangian \cref{eq:lagrangian}, we differentiate $\mathcal{L}$ with respect to the variables $c$, $\g$, and the symmetric matrix $\Hmat_D$, and set the derivatives to zero:
\begin{align}
    \frac{\partial \mathcal{L}}{\partial c} &= -2\sum_{i=1}^m \lambda_i = 0, \label{eq:kkt_c} \\
    \nabla_{\g} \mathcal{L} &= -2\sum_{i=1}^m \lambda_i (\x_i - \x_0) = \boldsymbol{0}, \label{eq:kkt_g} \\
    \nabla_{\Hmat_D} \mathcal{L} &= \Hmat_D - \sum_{i=1}^m \lambda_i (\x_i - \x_0)(\x_i - \x_0)^\top = \boldsymbol{0}. \label{eq:kkt_H}
\end{align}
\Cref{eq:kkt_c,eq:kkt_g} strictly enforce that $\boldsymbol{\lambda}$ must satisfy $\sum \lambda_i = 0$ and $\sum \lambda_i (\x_i-\x_0) = \boldsymbol{0}$. \Cref{eq:kkt_H} yields the explicit analytical structure of the Hessian update: $\Hmat_D = \sum_{i=1}^m \lambda_i (\x_i - \x_0)(\x_i - \x_0)^\top$, concluding the proof.
\end{proof}

\begin{theorem}[Analytical Construction of the KKT System]
\label{thm:kkt_matrix}
The unknown parameters $(\boldsymbol{\lambda}^\top, c, \g^\top)^\top$ of the difference function $D(\x)$ are uniquely determined by a symmetric block linear system $\boldsymbol{W} \in \RR^{(m+n+1) \times (m+n+1)}$, where the elements of its geometric sub-matrix $\boldsymbol{A} \in \RR^{m \times m}$ strictly rely on squared Euclidean inner products.
\end{theorem}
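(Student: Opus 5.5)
The plan is to substitute the Hessian structure from \cref{lem:null_space} back into the interpolation conditions. This turns the KKT conditions into a square linear system in $(\boldsymbol{\lambda}, c, \g)$ alone.

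Write $\boldsymbol{y}_i = \x_i - \x_0$. By \cref{lem:null_space}, $\Hmat_D = \sum_{j=1}^m \lambda_j \boldsymbol{y}_j \boldsymbol{y}_j^\top$. Inserting this into the constraint $D(\x_i) = r_i$ gives
\begin{equation*}
c + \g^\top \boldsymbol{y}_i + \frac{1}{2}\sum_{j=1}^m \lambda_j \bigl(\boldsymbol{y}_i^\top \boldsymbol{y}_j\bigr)^2 = r_i, \qquad i=1,\dots,m,
\end{equation*}
using $\boldsymbol{y}_i^\top \boldsymbol{y}_j \boldsymbol{y}_j^\top \boldsymbol{y}_i = (\boldsymbol{y}_i^\top \boldsymbol{y}_j)^2$. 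Define $A_{ij} = \frac{1}{2}(\boldsymbol{y}_i^\top \boldsymbol{y}_j)^2$, the vector $\boldsymbol{e} = (1,\dots,1)^\top \in \RR^m$, and $\boldsymbol{Y} = [\boldsymbol{y}_1,\dots,\boldsymbol{y}_m] \in \RR^{n\times m}$. The $m$ equations above are then $\boldsymbol{A}\boldsymbol{\lambda} + c\,\boldsymbol{e} + \boldsymbol{Y}^\top \g = \boldsymbol{r}$. The remaining $n+1$ rows come from \cref{eq:kkt_c,eq:kkt_g}, namely $\boldsymbol{e}^\top\boldsymbol{\lambda} = 0$ and $\boldsymbol{Y}\boldsymbol{\lambda} = \boldsymbol{0}$. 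Together these give
\begin{equation*}
\boldsymbol{W}\begin{pmatrix}\boldsymbol{\lambda}\\ c\\ \g\end{pmatrix} = \begin{pmatrix}\boldsymbol{r}\\ 0\\ \boldsymbol{0}\end{pmatrix}, \qquad \boldsymbol{W} = \begin{pmatrix}\boldsymbol{A} & \boldsymbol{e} & \boldsymbol{Y}^\top\\ \boldsymbol{e}^\top & 0 & \boldsymbol{0}^\top\\ \boldsymbol{Y} & \boldsymbol{0} & \boldsymbol{0}\end{pmatrix}.
\end{equation*}
Symmetry of $\boldsymbol{W}$ is immediate because $\boldsymbol{A}$ is symmetric, and its entries depend only on the inner products $\boldsymbol{y}_i^\top\boldsymbol{y}_j$, as the statement claims. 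The factor $-2$ in the Lagrangian \cref{eq:lagrangian} was chosen precisely so that \cref{eq:kkt_H} carries no stray constants, so the $\frac{1}{2}$ in $\boldsymbol{A}$ comes only from the quadratic term of $D$.

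The main obstacle is \emph{uniqueness}, i.e.\ nonsingularity of $\boldsymbol{W}$. This needs a poisedness hypothesis: the $(n+1)\times m$ matrix $\boldsymbol{P} = \begin{pmatrix}\boldsymbol{e}^\top\\ \boldsymbol{Y}\end{pmatrix}$ must have full row rank, which is the standard assumption that $\{\x_i\}$ is poised for linear interpolation. I would state this as the implicit hypothesis. Under it, suppose $\boldsymbol{W}(\boldsymbol{\lambda},c,\g)=\boldsymbol{0}$. Multiplying the first block row by $\boldsymbol{\lambda}^\top$ and using $\boldsymbol{P}\boldsymbol{\lambda}=\boldsymbol{0}$ gives $\boldsymbol{\lambda}^\top\boldsymbol{A}\boldsymbol{\lambda}=0$. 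Expanding,
\begin{equation*}
\boldsymbol{\lambda}^\top\boldsymbol{A}\boldsymbol{\lambda} = \tfrac12\sum_{i,j}\lambda_i\lambda_j(\boldsymbol{y}_i^\top\boldsymbol{y}_j)^2 = \tfrac12\Bigl\|\sum_j \lambda_j \boldsymbol{y}_j\boldsymbol{y}_j^\top\Bigr\|_F^2,
\end{equation*}
so $\boldsymbol{A}$ is positive semidefinite and $\Hmat_D=\boldsymbol{0}$. Then $D$ is affine and vanishes at every $\x_i$, so $c+\g^\top\boldsymbol{y}_i=0$ for all $i$, i.e.\ $\boldsymbol{P}^\top(c,\g^\top)^\top=\boldsymbol{0}$. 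Full row rank of $\boldsymbol{P}$ forces $c=0$ and $\g=\boldsymbol{0}$. Finally $\boldsymbol{A}\boldsymbol{\lambda}=\boldsymbol{0}$ follows from the first block row. To conclude $\boldsymbol{\lambda}=\boldsymbol{0}$, I would argue that the minimum Frobenius norm problem \cref{eq:miniFrob} is a strictly convex quadratic program in $\Hmat_D$ over an affine set that is nonempty under poisedness; hence $\Hmat_D$, and then $(c,\g)$ by full rank of $\boldsymbol{P}$, are unique. The clean way to pin down $\boldsymbol{\lambda}$ as well is to invoke the standard (Powell) assumption that the quadratic interpolation conditions on $\fsp{X}_k$ are linearly independent. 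This makes $\boldsymbol{A}$ positive definite on $\ker\boldsymbol{P}$, so $\boldsymbol{\lambda}^\top\boldsymbol{A}\boldsymbol{\lambda}=0$ with $\boldsymbol{\lambda}\in\ker\boldsymbol{P}$ gives $\boldsymbol{\lambda}=\boldsymbol{0}$. This is the step where I expect to need an explicit hypothesis rather than a pure derivation.
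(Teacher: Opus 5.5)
Your derivation of the block system is the same as the paper's. You substitute $\Hmat_D=\sum_j\lambda_j\boldsymbol{y}_j\boldsymbol{y}_j^\top$ from \cref{lem:null_space} into $D(\x_i)=r_i$, read off $\boldsymbol{A}_{ij}=\frac{1}{2}(\boldsymbol{y}_i^\top\boldsymbol{y}_j)^2$, and append the $n+1$ rows from \cref{eq:kkt_c,eq:kkt_g}. Your matrix stacking $\boldsymbol{e}^\top$ over $\boldsymbol{Y}$ is exactly the paper's $\boldsymbol{X}$.

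The difference is uniqueness. The paper's proof stops once the system is written down and treats ``uniquely determined'' as automatic, with no hypothesis on the points. You actually prove nonsingularity of $\boldsymbol{W}$ via the identity $\boldsymbol{\lambda}^\top\boldsymbol{A}\boldsymbol{\lambda}=\frac{1}{2}\|\sum_j\lambda_j\boldsymbol{y}_j\boldsymbol{y}_j^\top\|_F^2$, and that argument is correct.

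Your two hypotheses are exactly the right ones: together they suffice by your argument, and each is necessary.
\begin{itemize}
\item A nonzero $(c,\g)$ with $c+\g^\top\boldsymbol{y}_i=0$ for all $i$ gives the kernel vector $(\boldsymbol{0},c,\g)$.
\item A nonzero $\boldsymbol{\lambda}$ with $\sum_j\lambda_j=0$, $\sum_j\lambda_j\boldsymbol{y}_j=\boldsymbol{0}$ and $\sum_j\lambda_j\boldsymbol{y}_j\boldsymbol{y}_j^\top=\boldsymbol{0}$ forces $\boldsymbol{A}\boldsymbol{\lambda}=\boldsymbol{0}$, giving the kernel vector $(\boldsymbol{\lambda},0,\boldsymbol{0})$.
\end{itemize}
Neither condition implies the other. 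For example, five generic points on a plane in $\RR^3$ have independent quadratic interpolation conditions but a rank-deficient $\boldsymbol{X}$. So the statement is only true with both hypotheses added, and your version makes this explicit where the paper's does not.

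The one loose sentence is the strictly-convex-QP aside. Nonemptiness of the feasible set for arbitrary $\boldsymbol{r}$ requires the quadratic independence condition, not just linear poisedness. That remark should either cite the Powell hypothesis or be dropped; dropping it costs nothing, since your final step uses that hypothesis directly.
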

\begin{proof}
Substituting the analytical expression of $\Hmat_D$ from \cref{lem:null_space} back into the original interpolation condition $D(\x_i) = r_i$, we obtain:
\begin{equation}
    c + \g^\top (\x_i-\x_0) + \frac{1}{2}(\x_i-\x_0)^\top \left[ \sum_{j=1}^m \lambda_j (\x_j - \x_0)(\x_j - \x_0)^\top \right] (\x_i-\x_0) = r_i.
\end{equation}
Rearranging the quadratic term yields a squared inner product:
\begin{equation}
    c + \g^\top (\x_i-\x_0) + \sum_{j=1}^m \lambda_j \left[ \frac{1}{2} \left( (\x_i - \x_0)^\top (\x_j - \x_0) \right)^2 \right] = r_i, \quad \forall i = 1, \dots, m.
\end{equation}
Combining these $m$ linear equations with the $n+1$ null-space boundary conditions derived in \cref{eq:kkt_c,eq:kkt_g}, the global KKT system is explicitly formulated as:
\begin{equation}
\label{eq:BIG_KKT}
\begin{pmatrix}
\boldsymbol{A} & \boldsymbol{X}^{\top} \\
\boldsymbol{X} & \boldsymbol{0}
\end{pmatrix}
\begin{pmatrix}
\boldsymbol{\lambda} \\ c \\ \g
\end{pmatrix} =
\begin{pmatrix}
\boldsymbol{r} \\ 0 \\ \boldsymbol{0}
\end{pmatrix},
\end{equation}
where $\boldsymbol{0}$ is an $(n+1) \times (n+1)$ zero matrix. The augmented coordinate matrix $\boldsymbol{X} \in \RR^{(n+1) \times m}$ has $i$th column $(1,(\x_i-\x_0)^\top)^\top$. The elements of the geometric block $\boldsymbol{A}$ are exactly $\boldsymbol{A}_{i j} =\frac{1}{2}\langle \x_{i}-\x_0, \x_{j}-\x_0 \rangle^{2}$, completing the proof.
\end{proof}

\label{subsec:model_properties}
The derivation in \cref{thm:kkt_matrix} shows that the largest computational component of the KKT matrix (the $m \times m$ dense block $\boldsymbol{A}$) is governed exclusively by inner products. This gives a geometric invariance property:

\begin{theorem}[Orthogonal Invariance of Interpolation Geometry]
\label{thm:orthogonal_invariance}
Let $\boldsymbol{P} \in \RR^{n \times n}$ be any orthogonal transformation matrix (i.e., $\boldsymbol{P}^\top \boldsymbol{P} = \I$). If this orthogonal transformation is uniformly applied to the shifted interpolation set, yielding a new set $\hat{\fsp{X}}$ where $(\hat{\x}_i - \x_0) = \boldsymbol{P}(\x_i - \x_0)$ for all $i$, then the resulting new geometric sub-matrix $\widehat{\boldsymbol{A}}$ is strictly identical to the original matrix $\boldsymbol{A}$.
\end{theorem}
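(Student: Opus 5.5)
The plan is a direct entrywise computation, relying on the explicit formula for the geometric block obtained in \cref{thm:kkt_matrix}. There, $\boldsymbol{A}_{ij}=\frac{1}{2}\langle \x_i-\x_0,\x_j-\x_0\rangle^2$. So $\boldsymbol{A}$ depends on the interpolation set only through the Gram matrix of the shifted points $\boldsymbol{s}_i := \x_i-\x_0$. It therefore suffices to show that this Gram matrix is unchanged under $\boldsymbol{s}_i\mapsto \boldsymbol{P}\boldsymbol{s}_i$.

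\textbf{Key step.} For arbitrary indices $i,j$, write $\hat{\boldsymbol{s}}_i = \hat{\x}_i-\x_0 = \boldsymbol{P}\boldsymbol{s}_i$. Then
\begin{equation*}
\langle \hat{\boldsymbol{s}}_i,\hat{\boldsymbol{s}}_j\rangle = \boldsymbol{s}_i^\top \boldsymbol{P}^\top \boldsymbol{P}\boldsymbol{s}_j = \boldsymbol{s}_i^\top \boldsymbol{s}_j ,
\end{equation*}
using only the hypothesis $\boldsymbol{P}^\top\boldsymbol{P}=\I$. Squaring and halving gives $\widehat{\boldsymbol{A}}_{ij}=\boldsymbol{A}_{ij}$ for all $i,j$, hence $\widehat{\boldsymbol{A}}=\boldsymbol{A}$.

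\textbf{Base point.} One point needs care. The statement defines $\hat{\fsp{X}}$ through the shifted coordinates relative to the same base point $\x_0$. I would therefore stress that $\widehat{\boldsymbol{A}}$ is also built with respect to $\x_0$, so that \cref{thm:kkt_matrix} applies verbatim to $\hat{\fsp{X}}$ with $\hat{\boldsymbol{s}}_i$ in place of $\boldsymbol{s}_i$. The argument does not need $\boldsymbol{P}\boldsymbol{P}^\top=\I$, although for square $\boldsymbol{P}$ this holds automatically.

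\textbf{What is not claimed.} It is worth remarking that the rest of the KKT matrix $\boldsymbol{W}$ does change: $\boldsymbol{X}$ becomes $\mathrm{diag}(1,\boldsymbol{P})\boldsymbol{X}$. The theorem makes no claim about this block. This observation is what later permits the rank-two analysis: when $\boldsymbol{P}$ is a single-axis reflection, only one row of $\boldsymbol{X}$ changes sign. There is no genuine obstacle here. The only subtle point is making sure the shift is by the unchanged $\x_0$, so that the orthogonal map acts linearly on the shifted vectors rather than affinely on the raw points.
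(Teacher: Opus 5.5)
Your proposal is correct and follows the paper's proof: both substitute $\hat{\x}_i-\x_0=\boldsymbol{P}(\x_i-\x_0)$ into the entrywise formula $\boldsymbol{A}_{ij}=\frac{1}{2}\langle \x_i-\x_0,\x_j-\x_0\rangle^2$ from \cref{thm:kkt_matrix}, then use $\boldsymbol{P}^\top\boldsymbol{P}=\I$ to get $\widehat{\boldsymbol{A}}_{ij}=\boldsymbol{A}_{ij}$. Your side remarks are also accurate (the fixed base point, and $\boldsymbol{X}\mapsto\operatorname{diag}(1,\boldsymbol{P})\boldsymbol{X}$), and they anticipate how the paper later applies this result to the single-axis flip.
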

\begin{proof}
According to \cref{thm:kkt_matrix}, the elements of the new geometric matrix are defined as $\widehat{\boldsymbol{A}}_{i j} = \frac{1}{2} ((\hat{\x}_i - \x_0)^\top (\hat{\x}_j - \x_0))^2$. Substituting the orthogonal transformation into the inner product yields:
\begin{equation}
\widehat{\boldsymbol{A}}_{i j} = \frac{1}{2} \left( (\x_i - \x_0)^\top \boldsymbol{P}^\top \boldsymbol{P} (\x_j - \x_0) \right)^2.
\end{equation}
Since $\boldsymbol{P}^\top \boldsymbol{P} = \I$, the inner product evaluates to $\frac{1}{2} ( (\x_i - \x_0)^\top \I (\x_j - \x_0) )^2 = \boldsymbol{A}_{i j}$. Thus, $\widehat{\boldsymbol{A}} = \boldsymbol{A}$ holds unconditionally for all elements.
\end{proof}

\begin{remark}
\label{rem:cliffhanger}
\Cref{thm:orthogonal_invariance} points to a limitation in traditional DFO algorithms as well as an opportunity. When an arbitrary interpolation point is replaced, the pairwise inner products change globally, conventionally requiring an $\mathcal{O}(n^3)$ refactorization of the KKT system. The theorem, however, guarantees that restricting point-generation to \textit{orthogonal transformations} keeps the $\boldsymbol{A}$ block unchanged, confining the perturbation to the boundary block $\boldsymbol{X}$. A dense rotation can change many rows of $\boldsymbol{X}$; the rank-two update below is obtained specifically from a single-coordinate reflection, for which only one coordinate row changes.
\end{remark}

\section{Algorithm}
\label{sec:algorithm}

\subsection{Low-Rank Model Construction}
\label{sec:low_rank_updates}

As established in the previous section, restricting interpolation set modifications to orthogonal transformations ensures the geometric sub-matrix $\boldsymbol{A}$ remains invariant. This confines any structural perturbation to the augmented coordinate boundaries, inducing low-rank updates and avoiding the traditional $\mathcal{O}(n^3)$ refactorization.

When the interpolation set undergoes a structured modification (such as replacing a single point or performing a coordinate flip), the perturbation to the global KKT matrix $\Delta \boldsymbol{W}$ has the symmetric rank-at-most-two structure $\Delta\boldsymbol{W} = \boldsymbol{e}_k \boldsymbol{w}^\top + \boldsymbol{w} \boldsymbol{e}_k^\top$, where $\boldsymbol{e}_k$ is a standard basis vector and $\boldsymbol{w}$ is the perturbation vector.

\begin{theorem}[Analytical $\mathcal{O}(n^2)$ SMW Update]
\label{thm:smw}
Given a rank-at-most-two perturbation on the $k$-th row and column, the updated inverse KKT matrix $\widehat{\boldsymbol{H}} = (\boldsymbol{W} + \Delta\boldsymbol{W})^{-1}$ can be analytically computed via the Sherman-Morrison-Woodbury (SMW) identity:
\begin{equation}
\label{eq:updating-formula}
\widehat{\boldsymbol{H}}=\boldsymbol{H}+\frac{1}{\delta_{\rm SMW}}\left[q\boldsymbol{a}\boldsymbol{a}^{\top}+\alpha\boldsymbol{b}\boldsymbol{b}^{\top}-(1+\tau)\left(\boldsymbol{a}\boldsymbol{b}^{\top}+\boldsymbol{b}\boldsymbol{a}^{\top}\right)\right],
\end{equation}
where $\boldsymbol{a}=\boldsymbol{H}\boldsymbol{e}_k$, $\boldsymbol{b}=\boldsymbol{H}\boldsymbol{w}$, $\alpha=\boldsymbol{e}_k^{\top}\boldsymbol{a}$, $q=\boldsymbol{w}^{\top}\boldsymbol{b}$, $\tau=\boldsymbol{e}_k^{\top}\boldsymbol{b}$, and $\delta_{\rm SMW}=(1+\tau)^2-\alpha q$.
\end{theorem}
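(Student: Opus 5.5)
The plan is to write the perturbation as a product of thin factors and apply the Woodbury identity directly. Set $\boldsymbol{U} = [\boldsymbol{e}_k,\ \boldsymbol{w}] \in \RR^{N\times 2}$ and $\boldsymbol{V} = [\boldsymbol{w},\ \boldsymbol{e}_k]$, with $N = m+n+1$. Then
\[
\Delta\boldsymbol{W} = \boldsymbol{e}_k\boldsymbol{w}^\top + \boldsymbol{w}\boldsymbol{e}_k^\top = \boldsymbol{U}\boldsymbol{V}^\top .
\]
The Sherman--Morrison--Woodbury identity gives
\[
(\boldsymbol{W}+\boldsymbol{U}\boldsymbol{V}^\top)^{-1} = \boldsymbol{H} - \boldsymbol{H}\boldsymbol{U}\,(\I_2 + \boldsymbol{V}^\top\boldsymbol{H}\boldsymbol{U})^{-1}\,\boldsymbol{V}^\top\boldsymbol{H},
\]
valid whenever $\boldsymbol{W}$ and the $2\times 2$ capacitance matrix $\I_2+\boldsymbol{V}^\top\boldsymbol{H}\boldsymbol{U}$ are invertible. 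A short argument shows the latter holds exactly when $\boldsymbol{W}+\Delta\boldsymbol{W}$ is invertible.

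Next I compute the pieces. We have $\boldsymbol{H}\boldsymbol{U} = [\boldsymbol{a},\ \boldsymbol{b}]$ and $\boldsymbol{V}^\top\boldsymbol{H} = [\boldsymbol{b},\ \boldsymbol{a}]^\top$. Here I use that $\boldsymbol{H}$ is symmetric, because $\boldsymbol{W}$ is symmetric by \cref{thm:kkt_matrix}. Symmetry also gives $\boldsymbol{w}^\top\boldsymbol{a} = \boldsymbol{w}^\top\boldsymbol{H}\boldsymbol{e}_k = \boldsymbol{e}_k^\top\boldsymbol{H}\boldsymbol{w} = \tau$. Hence
\[
\I_2+\boldsymbol{V}^\top\boldsymbol{H}\boldsymbol{U} = \begin{pmatrix} 1+\tau & q \\ \alpha & 1+\tau \end{pmatrix},
\]
whose determinant is exactly $\delta_{\rm SMW} = (1+\tau)^2-\alpha q$. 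Its inverse is
\[
\delta_{\rm SMW}^{-1}\begin{pmatrix} 1+\tau & -q \\ -\alpha & 1+\tau \end{pmatrix}.
\]

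Multiplying out $[\boldsymbol{a},\boldsymbol{b}]$ times this inverse times $[\boldsymbol{b},\boldsymbol{a}]^\top$ gives
\[
\delta_{\rm SMW}^{-1}\big[(1+\tau)(\boldsymbol{a}\boldsymbol{b}^\top+\boldsymbol{b}\boldsymbol{a}^\top) - q\,\boldsymbol{a}\boldsymbol{a}^\top - \alpha\,\boldsymbol{b}\boldsymbol{b}^\top\big].
\]
Subtracting this from $\boldsymbol{H}$ yields \cref{eq:updating-formula}.

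For the complexity claim, forming $\boldsymbol{a}$ costs $\mathcal{O}(N)$, since it is a column of $\boldsymbol{H}$. Forming $\boldsymbol{b} = \boldsymbol{H}\boldsymbol{w}$ costs $\mathcal{O}(N^2)$. The scalars $\alpha,q,\tau$ cost $\mathcal{O}(N)$, and the three rank-one corrections cost $\mathcal{O}(N^2)$. So the total is $\mathcal{O}(n^2)$ when $m=\mathcal{O}(n)$.

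The main point needing care is the convention for $\boldsymbol{w}$ at its $k$-th entry. The product $\boldsymbol{e}_k\boldsymbol{w}^\top+\boldsymbol{w}\boldsymbol{e}_k^\top$ adds $2w_k$ to the diagonal entry $(k,k)$. So $\boldsymbol{w}$ must be taken as the changed row/column with its $k$-th component halved. I would state this explicitly, so that the perturbation is indeed of the assumed form.

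The other delicate point is the hypothesis $\delta_{\rm SMW}\neq 0$. I would justify it via the determinant lemma,
\[
\det(\boldsymbol{W}+\Delta\boldsymbol{W}) = \det(\boldsymbol{W})\,\delta_{\rm SMW},
\]
so $\delta_{\rm SMW}\neq0$ holds precisely when the updated KKT matrix is nonsingular, i.e., when the updated interpolation set is poised.
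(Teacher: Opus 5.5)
Your proposal is correct and follows the paper's proof essentially step for step: the same factorization $\Delta\boldsymbol{W}=\boldsymbol{U}\boldsymbol{V}^\top$ with $\boldsymbol{U}=[\boldsymbol{e}_k,\boldsymbol{w}]$ and $\boldsymbol{V}=[\boldsymbol{w},\boldsymbol{e}_k]$, the Woodbury identity, and the same $2\times 2$ capacitance matrix with determinant $\delta_{\rm SMW}$. You also make explicit several details the paper leaves implicit: the symmetry of $\boldsymbol{H}$, which is needed for $\boldsymbol{w}^\top\boldsymbol{a}=\tau$ and $\boldsymbol{V}^\top\boldsymbol{H}=[\boldsymbol{b},\boldsymbol{a}]^\top$; the explicit inverse and expanded product; and the halved-diagonal convention for $\boldsymbol{w}$. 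Your determinant-lemma remark matches the paper's subsequent nonsingularity theorem.
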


\begin{proof}
Let the original KKT matrix be $\boldsymbol{W}$ with its inverse $\boldsymbol{H} = \boldsymbol{W}^{-1}$. The perturbation has the symmetric rank-at-most-two structure $\Delta\boldsymbol{W} = \boldsymbol{e}_k\boldsymbol{w}^\top + \boldsymbol{w}\boldsymbol{e}_k^\top$.
This can be formulated as a block low-rank update $\Delta\boldsymbol{W} = \boldsymbol{U} \boldsymbol{V}^\top$, where $\boldsymbol{U} = [\boldsymbol{e}_k, \boldsymbol{w}]$ and $\boldsymbol{V} = [\boldsymbol{w}, \boldsymbol{e}_k]$.

By applying the general Woodbury matrix identity, the updated inverse $\widehat{\boldsymbol{H}} = (\boldsymbol{W} + \boldsymbol{U} \boldsymbol{V}^\top)^{-1}$ is given by:
\begin{equation}
\label{eq:woodbury_general}
    \widehat{\boldsymbol{H}} = \boldsymbol{H} - \boldsymbol{H} \boldsymbol{U} \boldsymbol{C}^{-1} \boldsymbol{V}^\top \boldsymbol{H},
\end{equation}
where $\boldsymbol{C} \in \RR^{2 \times 2}$ is the capacitance matrix defined as $\boldsymbol{C} = \I_2 + \boldsymbol{V}^\top \boldsymbol{H} \boldsymbol{U}$. Let us explicitly compute the inner products for $\boldsymbol{C}$:
\begin{equation*}
    \boldsymbol{C} = \I_2 + \begin{bmatrix} \boldsymbol{w}^\top \\ \boldsymbol{e}_k^\top \end{bmatrix} \boldsymbol{H} \begin{bmatrix} \boldsymbol{e}_k & \boldsymbol{w} \end{bmatrix} 
    = \begin{bmatrix} 1 + \tau & q \\ \alpha & 1 + \tau \end{bmatrix}.
\end{equation*}

The determinant of the capacitance matrix $\boldsymbol{C}$ dictates the denominator: $\det(\boldsymbol{C})=(1+\tau)^2-\alpha q=\delta_{\rm SMW}$. If this quantity is nonzero, the $2 \times 2$ block system can be inverted algebraically.

Expanding the block matrix product $\boldsymbol{H} \boldsymbol{U} \boldsymbol{C}^{-1} \boldsymbol{V}^\top \boldsymbol{H}$ from \cref{eq:woodbury_general} directly yields the analytical form presented in \cref{eq:updating-formula}. Since evaluating this formula requires one matrix-vector multiplication and rank-two outer products, the inverse matrix $\widehat{\boldsymbol{H}}$ is updated accurately in $\mathcal{O}(n^2)$ operations when $m=\mathcal{O}(n)$.
\end{proof}

A potential issue in the algebraic update \cref{eq:updating-formula} is the division by zero if the scalar $\delta_{\rm SMW} = 0$. We establish that this algebraic singularity is strictly prohibited by the geometric non-degeneracy of the interpolation set.

\begin{theorem}[Non-Singularity of the Capacitance Scalar]
\label{thm:non_singular}
During the rank-at-most-two updating process, assume the original KKT matrix $\boldsymbol{W}$ is nonsingular. Then the SMW denominator scalar satisfies $\delta_{\rm SMW} \neq 0$ if and only if the updated KKT matrix $\widehat{\boldsymbol{W}}$ is nonsingular, equivalently if the updated interpolation set $\widehat{\fsp{X}}$ is poised for the minimum-Frobenius-norm interpolation system. In particular, a coordinate flip from a poised set preserves nonsingularity, so $\delta_{\rm SMW} \neq 0$.
\end{theorem}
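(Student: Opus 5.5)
The proof rests on the determinant identity for low-rank updates (the matrix determinant lemma). With the factorization $\Delta\boldsymbol{W} = \boldsymbol{U}\boldsymbol{V}^\top$, $\boldsymbol{U} = [\boldsymbol{e}_k, \boldsymbol{w}]$, $\boldsymbol{V} = [\boldsymbol{w}, \boldsymbol{e}_k]$, used in the proof of \cref{thm:smw}, I would write
\begin{equation*}
\widehat{\boldsymbol{W}} = \boldsymbol{W} + \boldsymbol{U}\boldsymbol{V}^\top = \boldsymbol{W}\bigl(\I + \boldsymbol{H}\boldsymbol{U}\boldsymbol{V}^\top\bigr),
\end{equation*}
so that $\det \widehat{\boldsymbol{W}} = \det \boldsymbol{W} \cdot \det(\I_{m+n+1} + \boldsymbol{H}\boldsymbol{U}\boldsymbol{V}^\top)$. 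Sylvester's identity $\det(\I + \boldsymbol{M}\boldsymbol{N}) = \det(\I + \boldsymbol{N}\boldsymbol{M})$ reduces the second factor to $\det(\I_2 + \boldsymbol{V}^\top\boldsymbol{H}\boldsymbol{U}) = \det \boldsymbol{C}$. The proof of \cref{thm:smw} already computed this as $\delta_{\rm SMW}$. Hence
\begin{equation*}
\det\widehat{\boldsymbol{W}} = \delta_{\rm SMW}\,\det\boldsymbol{W},
\end{equation*}
and since $\det\boldsymbol{W}\neq 0$, we get $\delta_{\rm SMW}\neq 0$ if and only if $\widehat{\boldsymbol{W}}$ is nonsingular. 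This is the first equivalence.

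The second equivalence identifies nonsingularity of $\widehat{\boldsymbol{W}}$ with poisedness of $\widehat{\fsp{X}}$ for the minimum-Frobenius-norm system. I would take this as essentially the definition of poisedness in this setting, namely that the KKT system \cref{eq:BIG_KKT} has a unique solution for every right-hand side. I would also note the standard characterization behind it. The block $\boldsymbol{A}$ is positive semidefinite on $\ker \boldsymbol{X}$, since $\boldsymbol{\lambda}^\top\boldsymbol{A}\boldsymbol{\lambda} = \tfrac12\|\sum_i\lambda_i(\x_i-\x_0)(\x_i-\x_0)^\top\|_F^2$. Consequently $\widehat{\boldsymbol{W}}$ is nonsingular exactly when two conditions hold: $\widehat{\boldsymbol{X}}$ has full row rank $n+1$ (the points are affinely spanning), and no nonzero $\boldsymbol{\lambda}\in\ker\widehat{\boldsymbol{X}}$ satisfies $\sum_i\lambda_i(\hat\x_i-\x_0)(\hat\x_i-\x_0)^\top = 0$. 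I would sketch this via the standard saddle-point argument: take a null vector $(\boldsymbol{\lambda}, c, \g)$ of $\widehat{\boldsymbol{W}}$ and multiply the first block row by $\boldsymbol{\lambda}^\top$.

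For the final claim, let $\boldsymbol{P} = \I - 2\boldsymbol{e}_j\boldsymbol{e}_j^\top$ be a single-axis reflection applied to the shifted set. \Cref{thm:orthogonal_invariance} gives $\widehat{\boldsymbol{A}} = \boldsymbol{A}$. The new coordinate block satisfies $\widehat{\boldsymbol{X}} = \mathrm{diag}(1,\boldsymbol{P})\,\boldsymbol{X}$, which is an invertible left multiplication. Therefore
\begin{equation*}
\widehat{\boldsymbol{W}} = \boldsymbol{T}\boldsymbol{W}\boldsymbol{T}^\top, \qquad \boldsymbol{T} = \mathrm{diag}(\I_m, 1, \boldsymbol{P}),
\end{equation*}
with $\boldsymbol{T}$ orthogonal. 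It follows that $\det\widehat{\boldsymbol{W}} = \det\boldsymbol{W}\neq 0$, so $\delta_{\rm SMW} = 1$ in fact. This also confirms that only one row and column change, namely the one indexed by coordinate $j$, which is what the rank-at-most-two structure requires.

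The main obstacle is conceptual rather than computational. The phrase ``poised for the minimum-Frobenius-norm system'' has to be pinned down so that the second equivalence is a genuine statement and not circular. I would state explicitly that poisedness means nonsingularity of the KKT matrix and justify it through the null-space characterization above. A secondary point is the degenerate case $\boldsymbol{w}\parallel\boldsymbol{e}_k$, where the update has rank one. The determinant-lemma argument covers it unchanged, since nothing in it requires $\boldsymbol{U}$ to have full column rank.
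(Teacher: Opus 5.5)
Your proof is correct and follows the paper's route. The matrix determinant lemma gives $\det\widehat{\boldsymbol{W}}=\delta_{\rm SMW}\det\boldsymbol{W}$, poisedness is identified with nonsingularity of the KKT matrix (you usefully justify this with the saddle-point null-vector argument, where the paper only cites DFO theory), and the flip case follows because the flip acts on $\boldsymbol{W}$ by an orthogonal congruence, exactly as in the proof of \cref{thm:poisedness}. Your observation that this congruence forces $\delta_{\rm SMW}=1$ for a flip is also correct and sharper than the paper's $\delta_{\rm SMW}\neq 0$; it can be checked directly, since $\boldsymbol{w}=-2\boldsymbol{W}\boldsymbol{e}_k$ gives $\boldsymbol{b}=-2\boldsymbol{e}_k$, $\tau=-2$ and $q=0$.
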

\begin{proof}
As derived in \cref{thm:smw}, the perturbation is $\widehat{\boldsymbol{W}} = \boldsymbol{W} + \boldsymbol{U}\boldsymbol{V}^\top$, and the $2 \times 2$ capacitance matrix is $\boldsymbol{C} = \I_2 + \boldsymbol{V}^\top \boldsymbol{H} \boldsymbol{U}$. The Matrix Determinant Lemma yields $\det(\widehat{\boldsymbol{W}}) = \det(\boldsymbol{W}) \det(\boldsymbol{C})$, with $\det(\boldsymbol{C})=\delta_{\rm SMW}$. 

Since the original matrix $\boldsymbol{W}$ is nonsingular ($\det(\boldsymbol{W}) \neq 0$), $\delta_{\rm SMW}=0$ holds if and only if $\det(\widehat{\boldsymbol{W}})=0$. In DFO theory, nonsingularity of this KKT matrix is equivalent to poisedness of the corresponding minimum-Frobenius-norm interpolation set. A coordinate flip is an isometry of the shifted geometry and preserves this nonsingularity, so the denominator cannot vanish for a flip applied to a poised set.
\end{proof}

We now provide the explicit formulations of the perturbation vector $\boldsymbol{w}$ for both the inner-loop single-point replacement and the outer-loop parallel flipping mechanism.

\begin{remark}[Single-Point Replacement Vector]
\label{rem:single_point}
When a single worst point $\x_t$ is replaced by a new trial point $\x_{\text{new}}$ during the inner trust-region search, the perturbation is confined to the $t$-th row and column of $\boldsymbol{W}$. Let $\boldsymbol{y}_i=\x_i-\x_0$ and $\widehat{\boldsymbol{y}}=\x_{\text{new}}-\x_0$. In the representation $\Delta\boldsymbol{W}=\boldsymbol{e}_t\boldsymbol{w}_t^\top+\boldsymbol{w}_t\boldsymbol{e}_t^\top$, the perturbation vector $\boldsymbol{w}_t \in \RR^{m+n+1}$ is the new-minus-old column difference, with the diagonal change halved:
\begin{equation}
\label{eq:w_single}
\begin{cases}
(\boldsymbol{w}_{t})_{i} = \frac{1}{2}\left[(\boldsymbol{y}_i^\top \widehat{\boldsymbol{y}})^2-(\boldsymbol{y}_i^\top \boldsymbol{y}_t)^2\right], \quad & i = 1, \dots, m,\ i\neq t, \\
(\boldsymbol{w}_{t})_{t} = \frac{1}{4}\left(\|\widehat{\boldsymbol{y}}\|^4-\|\boldsymbol{y}_t\|^4\right), \\
(\boldsymbol{w}_{t})_{m+1} = 0, \\
(\boldsymbol{w}_{t})_{m+1+j} = \widehat{\boldsymbol{y}}_j-(\boldsymbol{y}_t)_j, \quad & j = 1, \dots, n.
\end{cases}
\end{equation}
\end{remark}

Building on the inner-product isomorphism, we define a $t$-axis flip as negating the $t$-th coordinate of every interpolation point relative to the base point: $(\x_i-\x_0)^{(t)} \to -(\x_i-\x_0)^{(t)}$.

\begin{theorem}[The Flipping Perturbation Vector]
\label{thm:flip}
Let the interpolation set $\fsp{X}$ undergo a $t$-axis flip. The overall perturbation $\Delta\boldsymbol{W}$ to the KKT matrix is rank at most two on the $k$-th row/column ($k = m+1+t$), and the corresponding perturbation vector $\boldsymbol{w}_k \in \RR^{m+n+1}$ is:
\begin{equation}
\label{eq:w_flip}
\begin{cases}
(\boldsymbol{w}_{k})_{i} = (\hat{\x}_i - \x_0)^{(t)}-(\x_i - \x_0)^{(t)}=-2(\x_i - \x_0)^{(t)}, \quad & i = 1, \dots, m, \\
(\boldsymbol{w}_{k})_{m+1} = 0, \\
(\boldsymbol{w}_{k})_{m+1+j} = 0, \quad & j = 1, \dots, n.
\end{cases}
\end{equation}
\end{theorem}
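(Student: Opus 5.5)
The argument is a direct bookkeeping of which entries of $\boldsymbol{W}$ change under a $t$-axis flip, followed by matching the change to the form $\boldsymbol{e}_k\boldsymbol{w}_k^\top+\boldsymbol{w}_k\boldsymbol{e}_k^\top$.

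\textbf{Step 1: the block $\boldsymbol{A}$ and the row of ones are unchanged.} The flip replaces each shifted point $\boldsymbol{y}_i=\x_i-\x_0$ by $\boldsymbol{P}\boldsymbol{y}_i$, where $\boldsymbol{P}=\I-2\boldsymbol{e}_t\boldsymbol{e}_t^\top$. This matrix is symmetric and orthogonal. By \cref{thm:orthogonal_invariance} the block $\boldsymbol{A}$ is unchanged. The zero block is trivially unchanged, and so is the first row of $\boldsymbol{X}$ (all ones).

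\textbf{Step 2: only the $t$-th coordinate row of $\boldsymbol{X}$ changes.} Rows $2,\dots,n+1$ of $\boldsymbol{X}$ are the coordinate rows $(\boldsymbol{y}_1^{(j)},\dots,\boldsymbol{y}_m^{(j)})$. Only the row with $j=t$ changes, and its entries go from $\boldsymbol{y}_i^{(t)}$ to $-\boldsymbol{y}_i^{(t)}$. In $\boldsymbol{W}$ this row sits at global index $k=m+1+t$, and it appears twice: as row $k$ of the lower-left block $\boldsymbol{X}$ and, by symmetry, as column $k$ of $\boldsymbol{X}^\top$. Hence $\Delta\boldsymbol{W}$ is supported on row $k$ and column $k$, restricted to the first $m$ positions.

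\textbf{Step 3: identify $\boldsymbol{w}_k$ and the rank.} Define $\boldsymbol{w}_k$ as the new-minus-old entries of column $k$, that is, $(\boldsymbol{w}_k)_i=-2\boldsymbol{y}_i^{(t)}$ for $i\le m$ and $0$ for all other indices. Then
\[
\boldsymbol{e}_k\boldsymbol{w}_k^\top+\boldsymbol{w}_k\boldsymbol{e}_k^\top
\]
reproduces $\Delta\boldsymbol{W}$ exactly. The one point that needs checking is the diagonal entry $(k,k)$. It lies in the zero block and stays zero, and indeed $(\boldsymbol{w}_k)_k=0$, so there is no double counting. This avoids the halving needed in \cref{rem:single_point}. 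The indices $m+1$ and $m+1+j$ with $j\ne t$ also lie in the zero block, which confirms that those components vanish.

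A sum of two rank-one matrices has rank at most two. The rank is exactly two unless $\boldsymbol{w}_k=\boldsymbol{0}$, which happens when every point has zero $t$-th shifted coordinate. This degenerate case is why the statement says ``at most''. There is no real obstacle in the argument; the main care is in the index convention $k=m+1+t$ and in verifying the diagonal entry.
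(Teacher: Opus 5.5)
Your proposal is correct and follows essentially the same route as the paper: you use the orthogonal invariance of $\boldsymbol{A}$ under the reflection, confine the change to coordinate row/column $k=m+1+t$, and read off $\boldsymbol{w}_k$ as the new-minus-old difference. Your explicit check that $(\boldsymbol{w}_k)_k=0$ (so no halving is needed) and your remark on when the rank is exactly two are details the paper leaves implicit.
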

\begin{proof}
Because the $t$-axis flip is an orthogonal reflection across the hyperplane $x^{(t)}=0$ in shifted coordinates, inner products are conserved. Thus, the top-left geometric block $\boldsymbol{A}$ remains unchanged. The perturbation is isolated to the coordinate row $k=m+1+t$ of the augmented block, where each entry changes from $(\x_i-\x_0)^{(t)}$ to $-(\x_i-\x_0)^{(t)}$. This new-minus-old difference gives the sparse vector $\boldsymbol{w}_k$ defined above.
\end{proof}

A typical vulnerability in $\mathcal{O}(n^2)$ updating algorithms is the ``cold start'' problem: constructing the initial inverse matrix $\boldsymbol{H}_0 = \boldsymbol{W}_0^{-1}$ traditionally demands an $\mathcal{O}(n^3)$ matrix factorization, creating a bottleneck for high-dimensional problems. Our framework avoids this via geometrical structuring.

\begin{theorem}[Analytical Inverse of the Orthogonal Initial System]
\label{thm:analytical_init}
Suppose the initial interpolation set $\fsp{X}_0$ is constructed using a standard orthogonal cross-stencil, i.e., $\fsp{X}_0 = \{\x_0\} \cup \{\x_0 \pm \Delta_{\text{init}} \boldsymbol{e}_j\}_{j=1}^n$. Then, the initial KKT matrix $\boldsymbol{W}_0$ possesses a highly sparse, block-diagonal-like structure, and its exact inverse $\boldsymbol{H}_0 = \boldsymbol{W}_0^{-1}$ can be constructed analytically in $\mathcal{O}(n^2)$ operations.
\end{theorem}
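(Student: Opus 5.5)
The plan is to write $\boldsymbol{W}_0$ explicitly, reorder its unknowns so that it splits into independent small blocks, and invert each block in closed form.

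\textbf{Step 1: the entries of $\boldsymbol{W}_0$.} Order the points as $\boldsymbol{y}_0=\boldsymbol{0}$, $\boldsymbol{y}_j^{+}=\Delta\boldsymbol{e}_j$, $\boldsymbol{y}_j^{-}=-\Delta\boldsymbol{e}_j$, with $\Delta=\Delta_{\text{init}}$ and $m=2n+1$. By \cref{thm:kkt_matrix}, $\boldsymbol{A}_{ij}=\tfrac12\langle\boldsymbol{y}_i,\boldsymbol{y}_j\rangle^2$. Every row and column belonging to $\boldsymbol{y}_0$ is therefore zero. Points on different axes are orthogonal, so they also contribute zero. For a pair $\pm\Delta\boldsymbol{e}_j$ on the same axis, all four entries equal $\tfrac12\Delta^4$. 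Hence $\boldsymbol{A}$ is block diagonal with $n$ blocks $\tfrac12\Delta^4\boldsymbol{1}\boldsymbol{1}^\top$ (each $2\times 2$) and a zero $1\times1$ block.

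For $\boldsymbol{X}$: its first row is all ones. Coordinate row $j$ has the entries $\pm\Delta$ in the columns of $\boldsymbol{y}_j^{\pm}$ and zeros elsewhere.

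\textbf{Step 2: decoupling by a change of variables.} The plan is to avoid inverting $\boldsymbol{W}_0$ directly and instead solve $\boldsymbol{W}_0\boldsymbol{z}=\boldsymbol{v}$ in closed form. Replace the pair multipliers $(\lambda_j^+,\lambda_j^-)$ by their sum $s_j$ and difference $d_j$. This is an orthogonal (Haar-type) transformation $\boldsymbol{T}$, up to scaling.

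In these variables the $\boldsymbol{A}$ block touches only the $s_j$, with coefficient $\Delta^4$. The coordinate row $j$ touches only $d_j$, with coefficient $\Delta$. The constant row couples $\lambda_0$ with all the $s_j$.

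So the system splits into two parts:
\begin{itemize}
\item $n$ independent $2\times2$ systems linking $d_j$ and $g_j$, each of the form $\begin{pmatrix}0&\Delta\\ \Delta&0\end{pmatrix}$ up to scaling, which are invertible;
\item one coupled "arrow" system in the unknowns $(\lambda_0,s_1,\dots,s_n,c)$, whose matrix is a diagonal matrix bordered by a single row and column.
\end{itemize}

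\textbf{Step 3: inverting the arrow block.} Solve the arrow system by elimination.
\begin{itemize}
\item The equation for $\boldsymbol{y}_0$ reads $c=r_0$, so $c$ is known immediately.
\item Each $s_j$ then follows from its row: $\tfrac12\Delta^4 s_j+c=\tfrac12(r_j^++r_j^-)$.
\item Finally $\lambda_0=-\sum_j s_j$.
\end{itemize}
This gives every entry of $\boldsymbol{W}_0^{-1}$ in closed form, expressed through $\boldsymbol{T}$.

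\textbf{Step 4: counting operations.} $\boldsymbol{H}_0=\boldsymbol{T}^\top\widetilde{\boldsymbol{H}}\boldsymbol{T}$, where $\widetilde{\boldsymbol{H}}$ is the inverse of the decoupled system from Steps 2 and 3. The matrix $\widetilde{\boldsymbol{H}}$ consists of diagonal $2\times 2$ blocks plus the arrow block's inverse, which has rank-one-type structure. Each entry is therefore an $O(1)$ expression, and writing out all $(3n+2)^2$ entries costs $O(n^2)$ operations. Nonsingularity follows because each decoupled block is nonsingular.

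\textbf{Main obstacle.} The hard part is Step 3. The arrow block has no $s$–$\lambda_0$ diagonal coupling: the $\lambda_0$ diagonal entry is zero. The elimination order above must therefore be verified carefully to confirm the block is invertible and to obtain its exact inverse entries, including the dense coupling that $\lambda_0=-\sum s_j$ introduces between all pairs of points.

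I would compute this block's inverse symbolically first, then check it against $\boldsymbol{W}_0\boldsymbol{H}_0=\I$ on the block structure.
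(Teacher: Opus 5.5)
Your argument is correct, and it is more explicit than the paper's. The paper's proof only records that $\boldsymbol{A}$ is sparse with constant $\tfrac12\Delta_{\text{init}}^4$ blocks and that $\boldsymbol{X}$ has entries $0,1,\pm\Delta_{\text{init}}$. It then asserts that ``recursively applying the Schur complement'' yields $\boldsymbol{H}_0$ through diagonal additions and scalings, without naming the pivots.

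That sketch passes over something your decomposition handles. $\boldsymbol{A}$ is singular: the row of $\x_0$ vanishes, and each block $\tfrac12\Delta^4\boldsymbol{1}\boldsymbol{1}^\top$ has rank one. So the textbook block inverse via $\boldsymbol{A}^{-1}$ and the Schur complement $-\boldsymbol{X}\boldsymbol{A}^{-1}\boldsymbol{X}^\top$ is unavailable, and the paper never actually argues that $\boldsymbol{W}_0$ is invertible.

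Your sum/difference congruence splits off $n$ anti-diagonal $2\times2$ blocks in $(d_j,g_j)$. Your elimination order $c\to s_j\to\lambda_0$ in the arrow block is a valid pivot sequence, so it proves nonsingularity and produces closed forms at once. It also reproduces the familiar cross-stencil model: $c=r_0$, central differences for $g_j$, and a diagonal $\Hmat_D$ whose entries are second differences divided by $\Delta^2$. The paper's phrasing buys brevity; yours buys a formula that can be checked.

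Two small corrections.

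\emph{Right-hand side.} In Step 3 you used the particular right-hand side of \cref{eq:BIG_KKT}. To obtain the full inverse, keep general data $v_c$ and $v_{g_j}$ in the last $n+1$ rows. This gives $\lambda_0=v_c-\sum_j s_j$ and $d_j=v_{g_j}/\Delta$.

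\emph{No dense coupling.} The ``dense coupling between all pairs of points'' you anticipate does not arise. Each $s_j$ depends only on $r_j^{\pm}$ and $r_0$, so the only dense part of $\boldsymbol{H}_0$ is the row and column of $\boldsymbol{y}_0$. Explicitly:
\begin{itemize}
\item the $\boldsymbol{y}_0$ row has $2n/\Delta^4$ on the diagonal, $-1/\Delta^4$ against each $\boldsymbol{y}_j^{\pm}$, and $1$ against $c$;
\item the four entries within each pair $\boldsymbol{y}_j^{\pm}$ equal $1/(2\Delta^4)$;
\item the entries pairing $\boldsymbol{y}_j^{\pm}$ with $g_j$ are $\pm1/(2\Delta)$;
\item every other entry is zero.
\end{itemize}
So $\boldsymbol{H}_0$ has only $\mathcal{O}(n)$ nonzeros, and the $\mathcal{O}(n^2)$ bound is just the cost of storing it as a dense array.
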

\begin{proof}
Consider the geometric elements $\boldsymbol{A}_{ij} = \frac{1}{2}\langle \x_i-\x_0, \x_j-\x_0 \rangle^2$. For the cross-stencil, vectors $\x_i-\x_0$ and $\x_j-\x_0$ are orthogonal unless they lie on the same coordinate axis. Consequently, $\langle \pm \Delta_{\text{init}} \boldsymbol{e}_u, \pm \Delta_{\text{init}} \boldsymbol{e}_v \rangle = 0$ for $u \neq v$. This forces the $m \times m$ matrix $\boldsymbol{A}$ to become a sparse matrix consisting primarily of zeros and diagonal constant blocks of $\frac{1}{2}\Delta_{\text{init}}^4$. 
Moreover, the coordinate matrix $\boldsymbol{X}$ solely contains elements $0$, $1$, and $\pm \Delta_{\text{init}}$. By recursively applying the Schur complement to this highly patterned, block-sparse structure, the inverse $\boldsymbol{H}_0$ can be explicitly formulated using only diagonal matrix additions and scalar scaling. Thus, $\boldsymbol{H}_0$ is instantiated mathematically without any numerical LU or QR factorization.
\end{proof}

\label{subsec:complexity}
The integration of the results above keeps the proposed framework at $\mathcal{O}(n^2)$ complexity throughout its lifecycle.
For a standard DFO trust-region algorithm, solving the KKT system from scratch at each iteration requires about $\frac{2}{3}(3n)^3 = 18n^3$ floating-point operations (FLOPs).

In our framework, the initial base model is constructed in $\mathcal{O}(n^2)$ FLOPs via \cref{thm:analytical_init}. During the parallel branching phase, each machine generates its flipped model by evaluating the SMW formula (\cref{thm:smw}). The dominant operation in \cref{eq:updating-formula} is the matrix-vector multiplication $\boldsymbol{H}\boldsymbol{w}$, requiring $\mathcal{O}((3n)^2) \approx \mathcal{O}(n^2)$ FLOPs. The inner-loop single-point replacements similarly run in $\mathcal{O}(n^2)$ time. Overall, the model maintenance overhead is $\mathcal{O}(n^2)$.
\subsection{Parallel Framework}
\label{sec:parallel}

Based on the low-rank algebraic structures established in the previous sections, we are now positioned to introduce a new parallel framework for model construction in derivative-free trust-region methods.

\label{subsec:motivation}
In derivative-free optimization, parallelization is traditionally achieved by evaluating the objective function at multiple sample points simultaneously. The core algebraic operations---model construction and trust-region subproblem solving---are, however, sequential processes.

Standard model-based algorithms replace a single interpolation point, rebuild or update the model factorization, and decide the next trial step based on the updated geometry. Attempting to parallelize this naively---distributing a baseline inverse matrix $\boldsymbol{H}$ to $P$ machines and having each explore different points independently---creates a bottleneck. Evaluating uncoordinated new points destroys the prior matrix structure, forcing each machine to solve a new KKT system from scratch. Each worker would incur the $\mathcal{O}(m^3)$ factorization cost. This algebraic overhead would cancel the time saved by parallel evaluations, motivating a structured mechanism that lets multiple machines update models concurrently without $\mathcal{O}(m^3)$ factorizations.

\label{subsec:multi_machine}
To bypass this cost, we propose a decentralized model construction paradigm using the rank-at-most-two updating formula. Each outer iteration begins with a single, globally synchronized state.

The global state consists of the current best base point $\x_{\text{best}}$, the associated baseline interpolation set $\fsp{X}$, the corresponding inverse KKT matrix $\boldsymbol{H}$, and the current trust-region radius $\Delta$. This state is distributed to $P$ parallel machines. Because $\boldsymbol{H}$ requires $\mathcal{O}(n^2)$ storage, broadcasting it imposes modest memory and communication requirements. All $P$ machines start from an identical, well-poised geometric foundation without redundantly factoring the base KKT system.

\label{subsec:flipping_strategy}
To ensure diverse exploration while maintaining $\mathcal{O}(n^2)$ update efficiency, the $P$ machines search in different directions without degrading the shared poised geometry. This is done via a randomized flipping strategy.

Upon receiving the baseline state, each machine $i \in \{1, \dots, P\}$ independently samples a coordinate axis index $t_i \sim \mathcal{U}\{1, \dots, n\}$. The machine applies a $t_i$-axis flip to its local copy of the interpolation geometry, generating $\hat{\fsp{X}}^{(i)}$. If the flipped physical points are used as interpolation points for the original black-box objective, their function values must be evaluated or already available; otherwise the operation is only a coordinate transformation and does not define a valid interpolation model for a general non-symmetric objective. Because this transformation is orthogonal, machine $i$ can apply the rank-at-most-two update (derived in \cref{thm:smw} and \cref{thm:flip}) to compute its local inverse matrix $\boldsymbol{H}^{(i)}$.

This strategy is effective for two reasons. First, the mutually orthogonal axes cause the $P$ machines to explore distinct orthants of the search space, yielding diverse search directions. Second, generating these $P$ local surrogate models costs $\mathcal{O}(P \cdot n^2)$ total operations, avoiding the $\mathcal{O}(P \cdot n^3)$ sequential bottleneck.

\label{subsec:communication}
In distributed HPC environments, frequent communication between nodes incurs network latency that can offset the gains from parallel processing. To improve the computation-to-communication ratio, our mechanism uses an inner-loop strategy.

After the initial flipping phase, each machine operates asynchronously. For a predefined number of inner steps (e.g., $S=10$), each machine independently solves its local trust-region subproblems, evaluates the objective function, and executes single-point rank-at-most-two updates on its local matrix $\boldsymbol{H}^{(i)}$. During these $S$ inner iterations, no network communication occurs between workers.

A global synchronization barrier is invoked only at the boundaries of the outer iterations. The network executes a global reduction operation (e.g., \texttt{MPI\_Allreduce}) to compare the local best function values $f(\x_{\text{opt}}^{(i)})$ found by all workers. The machine with the lowest objective value is declared the winner. The master node extracts the winning machine's local best point, its updated inverse matrix $\boldsymbol{H}^{(i^*)}$, and its current trust-region radius, establishing them as the new global baseline state. This state is then broadcast to initiate the next outer parallel iteration.

In this section, we describe the implementation of the Parallel Flipping Trust-Region framework. The architecture is a two-tier nested structure: an outer parallel loop for global coordination and diverse geometric exploration, and an inner sequential loop for local trust-region searches to hide network latency.

\label{subsec:overall_framework}
The execution life-cycle proceeds as follows. The master node constructs a base interpolation set and computes the initial inverse KKT matrix analytically. The global state is broadcast to $P$ independent worker machines. During the outer loop, each worker derives a candidate interpolation geometry via a randomized axis flip and the $\mathcal{O}(n^2)$ Sherman-Morrison-Woodbury (SMW) update, after obtaining any objective values required at newly introduced physical points. Each worker then enters an inner loop, executing $S$ sequential trust-region steps---solving local subproblems, evaluating the objective function, and performing single-point rank-at-most-two updates. A global synchronization barrier gathers the best local solutions and establishes the new baseline state for the next outer iteration.

\label{subsec:parallel_model}
To avoid the $\mathcal{O}(n^3)$ factorization at startup, the initial interpolation set $\fsp{X}_0$ is constructed using the orthogonal cross-stencil: $\fsp{X}_0 = \{\x_0\} \cup \{\x_0 \pm \Delta_{\text{init}} \boldsymbol{e}_j\}_{j=1}^n$. As established in \cref{thm:analytical_init}, this geometric symmetry allows the initial inverse matrix $\boldsymbol{H}_0$ to be computed exactly via analytical block inversion. 

At the onset of each outer iteration, machine $i \in \{1, \dots, P\}$ samples a random axis $t_i \sim \mathcal{U}\{1, \dots, n\}$ and negates the corresponding coordinates of its interpolation geometry. Utilizing the explicit perturbation vector $\boldsymbol{w}_{m+1+t_i}$ defined in \cref{eq:w_flip}, machine $i$ applies the SMW update to acquire the inverse KKT matrix for its local geometry in $\mathcal{O}(n^2)$ time when $m=\mathcal{O}(n)$. The resulting interpolation model is valid only after the associated objective values for the local interpolation set have been supplied.

\label{subsec:tcg_solver}
To compute the trial step $\x_{\text{new}} = \x_{\text{opt}} + \p_k$ within the local trust region $\|\p\| \le \Delta^{(i)}$, we employ the Steihaug-Toint Truncated Conjugate Gradient (TCG) method. 

Our flipping mechanism works naturally with the TCG solver: TCG never requires explicit matrix inversion, relying only on Hessian-vector products ($\nabla^2 Q \cdot \boldsymbol{v}$). Since our framework maintains the quadratic model efficiently, each TCG iteration runs in $\mathcal{O}(n^2)$ operations when $m=\mathcal{O}(n)$. The full subproblem cost is therefore $\mathcal{O}(N_{\rm CG}n^2)$, where $N_{\rm CG}$ is the number of TCG iterations used.

A critical step in the TCG method is enforcing the trust-region boundary constraint. If a conjugate search direction $\dd_k$ causes the step to violate the trust region, or if negative curvature is encountered ($\dd_k^\top \nabla^2 Q \dd_k \le 0$), the search is truncated at the boundary. Geometrically, this requires finding the positive scalar $\alpha^* > 0$ such that $\|\p_k + \alpha^* \dd_k\|^2 = \Delta^2$. This geometric intersection naturally expands to a quadratic equation:
\begin{equation}
\label{eq:truncation_quadratic}
(\dd_k^\top \dd_k)(\alpha^*)^2 + 2(\p_k^\top \dd_k)\alpha^* + (\p_k^\top \p_k - \Delta^2) = 0.
\end{equation}
The exact analytical positive root for the truncation step size is thus immediately given by:
\begin{equation}
\label{eq:truncation}
\alpha^* = \frac{-\p_k^\top \dd_k + \sqrt{(\p_k^\top \dd_k)^2 - (\dd_k^\top \dd_k)(\p_k^\top \p_k - \Delta^2)}}{\dd_k^\top \dd_k}.
\end{equation}

\label{subsec:acceptance}
Upon evaluating the objective function at $\x_{\text{new}}$, the local machine determines the step's viability using the standard reduction ratio:
\begin{equation}
\label{eq:rho}
\rho = \frac{f(\x_{\text{opt}}) - f(\x_{\text{new}})}{Q_s(\x_{\text{opt}}) - Q_s(\x_{\text{new}})}.
\end{equation}
We define the acceptance thresholds $\eta_1 = 0.25$, $\eta_2 = 0.75$, and an expansion factor $\gamma = 2$. 
If $\rho \ge \eta_2$, the model accuracy is deemed highly reliable, the trial point is accepted, and the trust-region radius is expanded: $\Delta \gets \min(\gamma \Delta, \Delta_{\text{max}})$. 
If $\eta_1 \le \rho < \eta_2$, the step is accepted, and the radius is typically maintained. 
If $\rho < \eta_1$, the trial point is rejected, and the radius is contracted: $\Delta \gets \max(\Delta/\gamma, \Delta_{\text{min}})$.

The enforcement of the safety bounds $\Delta \in [\Delta_{\text{min}}, \Delta_{\text{max}}]$ (e.g., $[10^{-12}, 10^6]$) prevents floating-point underflow or divergence during deep searches in highly non-convex black-box landscapes. Whenever a step is accepted, the worst-performing point in the interpolation set is replaced by $\x_{\text{new}}$, and a single-point rank-at-most-two update (\cref{rem:single_point}) is executed to refresh the local KKT matrix.

\label{subsec:complete_algorithm}
The integration of the parallel flipping initialization, TCG inner loop, and synchronized state updates culminates in the formal procedure presented in \cref{alg:framework}.

\begin{algorithm}[htbp]
\caption{Parallel Trust-Region with Flipping Mechanism}
\label{alg:framework}
\begin{algorithmic}[1]
\REQUIRE Dimension $n$, machines $P$, initial point $\x_{0}$, inner steps $S=10$.
\STATE \textbf{Hyperparameters:} $\gamma = 2, \eta_1 = 0.25, \eta_2 = 0.75$, bounds $\Delta \in [10^{-12}, 10^6]$.
\STATE \textbf{Initialization:} Evaluate $f(\x)$ on cross-stencil $\fsp{X} = \{\x_0\} \cup \{\x_0 \pm \Delta_{\text{init}} \boldsymbol{e}_j\}_{j=1}^n$. Compute $\boldsymbol{H} \gets \boldsymbol{W}_0^{-1}$ analytically (\cref{thm:analytical_init}).
\WHILE{termination criteria not met}
    \FOR{machine $i=1,\dots,P$ \textbf{in parallel}}
        \STATE \textbf{[Flip]} Uniformly select axis $t_i \in \{1,\dots,n\}$. Generate $\hat{\fsp{X}}^{(i)}$ via $t_i$-axis flip and obtain any missing objective values on the flipped interpolation points.
        \STATE \textbf{[Update]} Compute perturbation $\boldsymbol{w}_{m+1+t_i}$ (\cref{eq:w_flip}). Update inverse $\boldsymbol{H}^{(i)}$ via $\mathcal{O}(n^2)$ SMW formula (\cref{eq:updating-formula}).
        \STATE Solve KKT system for $(\boldsymbol{\lambda}^{(i)}, c^{(i)}, \g^{(i)})$; Construct local quadratic model $Q_0^{(i)}(\x)$.
        
        \STATE \textbf{[Inner Loop]} Initialize local state: $\boldsymbol{H}_s \gets \boldsymbol{H}^{(i)}$, $Q_s \gets Q_0^{(i)}$, $\Delta^{(i)} \gets \Delta$.
        \FOR{$s=1,\dots,S$}
            \STATE Identify local center: $\x_{\text{opt}} \gets \arg\min_{\x \in \hat{\fsp{X}}^{(i)}} f(\x)$.
            \STATE Solve subproblem: $\x_{\text{new}} \gets \text{TCG}(\x_{\text{opt}}, \nabla Q_s, \nabla^2 Q_s, \Delta^{(i)})$ applying \cref{eq:truncation}.
            \STATE Evaluate actual vs. predicted reduction ratio $\rho$ using \cref{eq:rho}.
            \STATE \textbf{TR Update:} If $\rho \ge \eta_2$, $\Delta^{(i)} \gets \min(\gamma \Delta^{(i)}, 10^6)$; If $\rho < \eta_1$, $\Delta^{(i)} \gets \max(\Delta^{(i)}/\gamma, 10^{-12})$.
            \IF{$\rho \ge \eta_1$ (Step accepted)}
                \STATE Replace the worst point index $t$ in $\hat{\fsp{X}}^{(i)}$ with $\x_{\text{new}}$.
                \STATE Compute perturbation $\boldsymbol{w}_t$ (\cref{eq:w_single}). Execute $\boldsymbol{H}_s \gets \text{Rank2Up}(\boldsymbol{H}_s, t, \boldsymbol{w}_t)$.
                \STATE Update local model to $Q_s(\x)$ using new KKT residuals.
            \ENDIF
        \ENDFOR
        \STATE Save local optimal point $\x_{\text{opt}}^{(i)}$, and store matrix state $\boldsymbol{H}^{(i)} \gets \boldsymbol{H}_s$.
    \ENDFOR
    \STATE \textbf{[Global Sync]} Identify global best: $\text{best} \gets \arg\min_i f(\x_{\text{opt}}^{(i)})$ via \texttt{MPI\_Allreduce}.
    \STATE Broadcast the winning matrix state $\boldsymbol{H}^{(\text{best})}$ and radius $\Delta^{(\text{best})}$ to all machines.
\ENDWHILE
\end{algorithmic}
\end{algorithm}
\section{Theoretical Analysis}
\label{sec:theory}

In this section, we establish the theoretical foundations of the proposed Parallel Flipping Trust-Region framework. We prove the geometric preservation property of the flipping mechanism and state the standard additional assumptions under which first-order global convergence follows. We also analyze the computational complexity of the low-rank updates.

\label{subsec:model_accuracy}
To formally analyze the accuracy of the surrogate models, we establish the standard derivative-free optimization (DFO) assumptions regarding the objective function and the algorithm mechanism.

\begin{assumption}[Smoothness and Boundedness]
\label{ass:smoothness}
The objective function $f: \RR^n \to \RR$ is bounded below. The gradient $\nabla f(\x)$ is Lipschitz continuous with a constant $L > 0$ in an open domain containing all iterates.
\end{assumption}

\begin{assumption}[Bounded Model Hessian]
\label{ass:hessian}
The sequence of Hessian matrices of the constructed quadratic models is uniformly bounded, i.e., there exists a constant $\kappa_B > 0$ such that $\|\nabla^2 Q_k\| \le \kappa_B$ for all iterations $k$.
\end{assumption}

\begin{assumption}[Baseline Model Accuracy]
\label{ass:accuracy}
Following the standard framework of DFO, we assume that the underlying sequential single-point update mechanism asymptotically maintains the interpolation set within a neighborhood of the trust region, yielding \textit{fully linear} baseline models.
\end{assumption}

\begin{theorem}[Preservation of Poisedness]
\label{thm:poisedness}
If the original interpolation set $\fsp{X}$ is $\Lambda$-poised in a Euclidean ball centered at the base point $\x_0$, then the flipped interpolation set $\hat{\fsp{X}}$ generated by the $t$-axis flip about $\x_0$ maintains the same poisedness constant $\Lambda$ in that ball.
\end{theorem}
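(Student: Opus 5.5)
The natural route is the definition of $\Lambda$-poisedness through Lagrange polynomials, as in Conn--Scheinberg--Vicente, adapted to the minimum Frobenius norm setting. The set $\fsp{X}$ is $\Lambda$-poised in $\mathcal{B}=\mathcal{B}(\x_0,\Delta)$ if the (minimum-Frobenius-norm) Lagrange polynomials $\ell_1,\dots,\ell_m$ satisfy $\max_{i}\max_{\x\in\mathcal{B}}|\ell_i(\x)|\le\Lambda$. Let $\boldsymbol{R}=\I-2\boldsymbol{e}_t\boldsymbol{e}_t^\top$ be the reflection and $\phi(\x)=\x_0+\boldsymbol{R}(\x-\x_0)$ the flip map. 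The plan is to show that the Lagrange polynomials of $\hat{\fsp{X}}=\phi(\fsp{X})$ are exactly $\hat\ell_i=\ell_i\circ\phi^{-1}=\ell_i\circ\phi$. Since $\phi$ maps $\mathcal{B}$ onto itself, this gives the same supremum bound.

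\textbf{Step 1: $\hat\ell_i$ is admissible.} $\ell_i\circ\phi$ is again a quadratic. If $\ell_i(\x)=c+\g^\top(\x-\x_0)+\tfrac12(\x-\x_0)^\top\G(\x-\x_0)$, then $\ell_i\circ\phi$ has coefficients $(c,\boldsymbol{R}\g,\boldsymbol{R}\G\boldsymbol{R})$. It satisfies $\ell_i(\phi(\hat\x_j))=\ell_i(\x_j)=\delta_{ij}$ because $\phi$ is an involution. So it meets the interpolation conditions on $\hat{\fsp{X}}$.

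\textbf{Step 2: it is the minimum Frobenius norm solution.} The Frobenius norm is invariant under orthogonal conjugation: $\|\boldsymbol{R}\G\boldsymbol{R}\|_F=\|\G\|_F$. The map $Q\mapsto Q\circ\phi$ is a bijection between quadratics interpolating $\delta_{i\cdot}$ on $\fsp{X}$ and those doing so on $\hat{\fsp{X}}$, and it preserves the objective. Hence minimizers correspond. Uniqueness, which follows from nonsingularity of the KKT matrix by \cref{thm:kkt_matrix} and \cref{thm:non_singular}, then identifies $\hat\ell_i=\ell_i\circ\phi$.

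The same conclusion can be read off algebraically, which serves as a cross-check. By \cref{thm:orthogonal_invariance}, $\widehat{\boldsymbol{A}}=\boldsymbol{A}$. The new $\widehat{\boldsymbol{X}}$ equals $\mathrm{diag}(1,\boldsymbol{R})\boldsymbol{X}$. The transformed KKT solution is therefore $(\boldsymbol{\lambda},c,\boldsymbol{R}\g)$, and the Hessian $\sum\lambda_j\boldsymbol{R}\boldsymbol{y}_j\boldsymbol{y}_j^\top\boldsymbol{R}$ matches $\boldsymbol{R}\G\boldsymbol{R}$ as in \cref{lem:null_space}.

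\textbf{Step 3: the bound transfers.} We have
$\max_{\x\in\mathcal{B}}|\hat\ell_i(\x)|=\max_{\x\in\mathcal{B}}|\ell_i(\phi(\x))|=\max_{\boldsymbol{z}\in\phi(\mathcal{B})}|\ell_i(\boldsymbol{z})|$.
Because the ball is centred at $\x_0$ and $\phi$ is an isometry fixing $\x_0$, $\phi(\mathcal{B})=\mathcal{B}$. The maxima therefore coincide, and so does $\Lambda$. Applying the argument with $\phi$ replaced by $\phi^{-1}=\phi$ gives equality rather than a one-sided bound.

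The main obstacle is the centering hypothesis. If the poisedness ball is centred at $\x_k\neq\x_0$, then $\phi(\mathcal{B})$ is a different ball and Step 3 fails. I would state explicitly that the ball is centred at the flip centre $\x_0$, and note that otherwise one only gets poisedness in $\mathcal{B}(\phi(\x_k),\Delta)$. A secondary subtlety is making sure the poisedness notion used is the minimum-Frobenius-norm one, so that Step 2's uniqueness argument applies. If instead a norm-equivalence definition via $\|\boldsymbol{W}^{-1}\|$ is used, I would argue that $\widehat{\boldsymbol{W}}=\boldsymbol{S}\boldsymbol{W}\boldsymbol{S}$ with $\boldsymbol{S}=\mathrm{diag}(\I_m,1,\boldsymbol{R})$ orthogonal, so all norms of the inverse are preserved.
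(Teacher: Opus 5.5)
Your proof is correct and follows the paper's route. The paper writes $\widehat{\boldsymbol{W}}$ as an orthogonal congruence of $\boldsymbol{W}$, deduces $\widehat\ell_i=\ell_i\circ\phi$, and concludes from the invariance of the centred ball under the flip; this is your Steps 1--3 together with your algebraic cross-check, and your variational Step 2 simply makes explicit the identification the paper asserts with ``equivalently.'' One small tidy-up: uniqueness on the flipped side should come from your bijection applied to the poised original set, or from the congruence itself. It should not come from the flip clause of \cref{thm:non_singular}, because that clause rests on this very fact.
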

\begin{proof}
Let $F_t=\I-2\boldsymbol{e}_t\boldsymbol{e}_t^\top$ and $S=\operatorname{diag}(1,F_t)$. If $\boldsymbol{X}$ is the augmented coordinate block in \cref{eq:BIG_KKT}, then the flipped block is $\widehat{\boldsymbol{X}}=S\boldsymbol{X}$, while the geometric block $\boldsymbol{A}$ is unchanged. Hence
\begin{equation*}
\widehat{\boldsymbol{W}}=\begin{pmatrix}\I_m&0\\0&S\end{pmatrix}\boldsymbol{W}\begin{pmatrix}\I_m&0\\0&S^\top\end{pmatrix}.
\end{equation*}
The congruence matrix is orthogonal, so nonsingularity and the two-norm condition number of the KKT interpolation system are preserved. Equivalently, the Lagrange polynomials transform as $\widehat{\ell}_i(\x_0+F_t\boldsymbol{y})=\ell_i(\x_0+\boldsymbol{y})$. Since the Euclidean ball centered at $\x_0$ is invariant under $F_t$, their suprema over the ball, and hence the poisedness constant, are identical.
\end{proof}

\begin{remark}
In traditional minimum Frobenius norm updating schemes, unbounded Hessian growth (violating \cref{ass:hessian}) is typically caused by poorly poised interpolation sets where the KKT system becomes ill-conditioned. As proven in \cref{thm:poisedness}, the flipping mechanism does not worsen this conditioning. This removes one geometric source of ill-conditioning, but the bounded-Hessian assumption still depends on the usual model-management mechanism and the sampled function values.
\end{remark}

Based on these properties, we establish the fundamental error bound for our parallel framework.

\begin{lemma}[Inheritance of Fully Linear Bounds]
\label{lem:fully_linear}
Suppose \cref{ass:smoothness} and \cref{ass:accuracy} hold, suppose the base point for the flip is the current trust-region center ($\x_0=\x_k$), and suppose that each flipped model interpolates objective values evaluated at the flipped points. Because the flipping operation preserves the geometric poisedness constant $\Lambda$ in the centered trust region, the standard DFO interpolation error bounds give the fully linear model property for $Q_k^{(i)}$ on $\mathcal{B}(\x_k, \Delta_k)$. That is, there exist constants $\kappa_{ef}, \kappa_{eg} > 0$ independent of $k$, such that for all trial steps $\|\dd\| \le \Delta_k$:
\begin{align}
    |f(\x_k+\dd) - Q_k^{(i)}(\x_k+\dd)| &\le \kappa_{ef} \Delta_k^2, \label{eq:err_f} \\
    \|\nabla f(\x_k) - \nabla Q_k^{(i)}(\x_k)\| &\le \kappa_{eg} \Delta_k. \label{eq:err_g}
\end{align}
\end{lemma}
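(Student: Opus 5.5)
The plan is to reduce the lemma to the standard interpolation error bounds for minimum-Frobenius-norm models on $\Lambda$-poised sets, as in Conn, Scheinberg and Vicente. Those bounds have three inputs: Lipschitz continuity of $\nabla f$ (from \cref{ass:smoothness}), a uniform bound on the model Hessian (from \cref{ass:hessian}), and $\Lambda$-poisedness of the interpolation set in the ball where the bound is claimed. So the argument has three steps: establish the geometric hypothesis for the flipped set, confirm that the flipped model is a genuine interpolant of $f$, and then invoke the known error estimates.

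\textbf{Step 1 (geometry).} By \cref{ass:accuracy}, the baseline set $\fsp{X}$ is $\Lambda$-poised in $\mathcal{B}(\x_k,\Delta_k)$, with $\Lambda$ independent of $k$. This is what yields fully linear baseline models. Because the flip is taken about $\x_0=\x_k$, \cref{thm:poisedness} applies directly: the flipped set $\hat{\fsp{X}}$ is $\Lambda$-poised in the same ball. The ball is invariant under $F_t$, and the Lagrange polynomials satisfy $\widehat{\ell}_i(\x_k+F_t\boldsymbol{y})=\ell_i(\x_k+\boldsymbol{y})$. The flipped points also stay within distance $\Delta_k$ of $\x_k$, since $F_t$ is an isometry fixing $\x_k$. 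Hence $\hat{\fsp{X}}$ satisfies the same sample-set hypotheses as the baseline set, with the same constants.

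\textbf{Step 2 (interpolation).} By hypothesis, $Q_k^{(i)}$ interpolates the true values $f(\hat{\x}_i)$. This is essential. If the old values were merely reused, the model would interpolate $f\circ F_t$ rather than $f$, and no error bound against $f$ could hold for a nonsymmetric objective. The inverse KKT matrix from \cref{thm:smw} and \cref{thm:flip} is exact, and it is nonsingular by \cref{thm:non_singular}. So $Q_k^{(i)}$ is exactly the minimum-Frobenius-norm-update interpolant of $f$ on $\hat{\fsp{X}}$.

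\textbf{Step 3 (error bounds).} Write $\boldsymbol{y}_i=\hat{\x}_i-\x_k$ and $e(\x)=f(\x)-Q_k^{(i)}(\x)$. Taylor's theorem with \cref{ass:smoothness,ass:hessian} bounds the remainder of $e$ about $\x_k$. Combining the interpolation conditions $e(\hat{\x}_i)=0$ gives a linear system for $\nabla e(\x_k)$ whose matrix is built from the $\boldsymbol{y}_i/\Delta_k$. Bounding the inverse of the scaled matrix in terms of $\Lambda$ then yields $\|\nabla e(\x_k)\|\le \kappa_{eg}\Delta_k$, with $\kappa_{eg}$ depending only on $L$, $\kappa_B$, $n$ and $\Lambda$. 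A second Taylor expansion gives the function bound $|e(\x_k+\dd)|\le \kappa_{ef}\Delta_k^2$ for $\|\dd\|\le\Delta_k$. Since these constants are functions only of $k$-independent quantities, \cref{eq:err_f,eq:err_g} follow.

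\textbf{Main obstacle.} The difficulty is that the minimum-Frobenius-norm (underdetermined) theory is usually stated for $\Lambda$-poisedness in the Frobenius-norm sense, with a bounded model Hessian as an extra hypothesis. I would therefore check that the constant preserved by \cref{thm:poisedness} is exactly the one used in that theory. It is: the Lagrange polynomials are those of the same KKT system $\widehat{\boldsymbol{W}}$, which is orthogonally congruent to $\boldsymbol{W}$, so the needed constants, including the scaled-system condition number, are invariant. I would then cite the standard theorem rather than re-derive it, and rely on \cref{ass:hessian} for the Hessian bound, since the flip does not itself control $\|\nabla^2 Q_k^{(i)}\|$.
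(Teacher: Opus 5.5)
Your proposal follows the same route as the paper, which gives no separate proof and justifies the lemma only through the clause in its statement: \cref{thm:poisedness} transfers the poisedness constant $\Lambda$ to the flipped set in the ball centered at $\x_0=\x_k$, and the standard Conn--Scheinberg--Vicente interpolation error bounds then give the fully linear estimates. Your Steps 2 and 3 make explicit what the paper leaves implicit, namely that objective values must actually be evaluated at the flipped points, and that the standard bounds for minimum-Frobenius-norm updating models need a uniform model-Hessian bound; you rightly take that bound from \cref{ass:hessian}, even though the lemma's hypothesis list cites only \cref{ass:smoothness,ass:accuracy}.
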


\subsection{Convergence}
\label{subsec:convergence}
We now establish first-order global convergence. The algebraic flipping update preserves geometry, while convergence additionally requires the usual trust-region step-acceptance and incumbent-update conditions.

The TCG solver used to compute the trial step satisfies a standard sufficient-decrease property.

\begin{lemma}[Sufficient Decrease from TCG]
\label{lem:tcg_decrease}
Under a fully linear model on $\mathcal{B}(\x_k,\Delta_k)$, the trial step $\dd_k$ produced by the TCG solver satisfies
\begin{equation}
    pred_k \;:=\; Q_k(\x_k) - Q_k(\x_k+\dd_k) \;\ge\; \frac{\kappa_{\mathrm{cauchy}}}{2}\|\g_k\|\min\left(\Delta_k,\;\frac{\|\g_k\|}{\|\nabla^2 Q_k\|}\right),
\end{equation}
where $\g_k=\nabla Q_k(\x_k)$ and $\kappa_{\mathrm{cauchy}}\in(0,1]$. In particular, using the bounded model Hessian from \cref{ass:hessian}, the TCG solver guarantees at least the Cauchy decrease
\begin{equation}
    pred_k \ge \frac{1}{2}\|\g_k\|\min\left(\Delta_k,\frac{\|\g_k\|}{\kappa_B}\right).
\end{equation}
\end{lemma}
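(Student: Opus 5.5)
The plan is the standard Cauchy-point argument for Steihaug--Toint TCG; full linearity of the model is not actually used beyond the step being well defined. Write $\boldsymbol{B}_k=\nabla^2 Q_k$. The first TCG iterate starts from $\p_0=\boldsymbol{0}$ with search direction $\dd_0=-\g_k$. I will argue in two stages. First, the first TCG iterate is exactly the Cauchy point, i.e.\ the minimizer of $Q_k$ along $-\g_k$ inside the trust region. Second, subsequent TCG iterates never increase the model value, so the final step $\dd_k$ achieves at least the Cauchy decrease.

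For the first stage, I consider $\phi(\alpha)=Q_k(\x_k-\alpha\g_k)-Q_k(\x_k)=-\alpha\|\g_k\|^2+\tfrac12\alpha^2\g_k^\top\boldsymbol{B}_k\g_k$ for $\alpha\in[0,\Delta_k/\|\g_k\|]$, and split into the usual cases.
\begin{itemize}
\item If $\g_k^\top\boldsymbol{B}_k\g_k\le 0$, TCG truncates at the boundary via \cref{eq:truncation}. The decrease is then at least $\|\g_k\|\Delta_k$.
\item If the curvature is positive and the unconstrained minimizer $\alpha=\|\g_k\|^2/\g_k^\top\boldsymbol{B}_k\g_k$ lies inside the ball, the decrease is $\|\g_k\|^4/(2\g_k^\top\boldsymbol{B}_k\g_k)\ge \|\g_k\|^2/(2\|\boldsymbol{B}_k\|)$.
\item Otherwise the step is truncated at the boundary. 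Here $\g_k^\top\boldsymbol{B}_k\g_k\,\Delta_k\le\|\g_k\|^3$, and the decrease is at least $\tfrac12\|\g_k\|\Delta_k$.
\end{itemize}
Combining the three cases gives $pred\ge\tfrac12\|\g_k\|\min(\Delta_k,\|\g_k\|/\|\boldsymbol{B}_k\|)$. This is the first claim with $\kappa_{\mathrm{cauchy}}=1$; any smaller value also works if inexactness is allowed.

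The second stage carries the main technical content. I will invoke the standard property of Steihaug--Toint CG (Steihaug 1983; Conn--Gould--Toint): while positive curvature holds, the CG iterates $\p_j$ minimize $Q_k$ over the growing Krylov subspaces, so $Q_k(\x_k+\p_j)$ is monotonically nonincreasing. When truncation occurs, either for negative curvature or for leaving the ball, the step moves along a direction $\dd_j$ satisfying $\g_j^\top\dd_j<0$. Along that direction the model decreases until the boundary in the negative-curvature case, and until the CG step length in the boundary case. Because the quadratic is convex along that segment, the boundary point still lies before the one-dimensional minimizer, so the model keeps decreasing. The delicate point is this descent property of $\dd_j$ at truncation, together with the fact that $\|\p_j\|$ increases monotonically, so the boundary crossing is unique. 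I will cite these as known results rather than reprove them.

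Finally, \cref{ass:hessian} gives $\|\boldsymbol{B}_k\|\le\kappa_B$. Hence $\|\g_k\|/\|\boldsymbol{B}_k\|\ge\|\g_k\|/\kappa_B$, which yields the second displayed bound. If $\boldsymbol{B}_k=\boldsymbol{0}$, the ratio is interpreted as $+\infty$, and the bound follows directly from the linear case.
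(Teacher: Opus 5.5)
Your proof is correct and is exactly the standard reasoning the paper relies on: the paper gives no proof of this lemma and simply calls it a ``standard sufficient-decrease property.'' That standard reasoning is what you wrote, namely that the first Steihaug--Toint iterate coincides with the Cauchy point and that later CG iterates and the final truncation step never increase the model. Your side remarks are also accurate: full linearity plays no role, your argument gives $\kappa_{\mathrm{cauchy}}=1$ (which the second display with coefficient $\tfrac12$ actually requires), and the only trivial omission is the case $\g_k=\boldsymbol{0}$, where the bound reads $0\ge 0$.
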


\begin{assumption}[Model management and incumbent monotonicity]
\label{ass:model_management}
Successful steps satisfy $f(\x_k)-f(\x_{k+1})\ge \eta_1 pred_k$, and the synchronized global incumbent is chosen so that $f(\x_{k+1})\le f(\x_k)$ for all $k$.
\end{assumption}

\begin{theorem}[First-Order Global Convergence]
\label{thm:global_convergence}
Let the sequence of global incumbents $\{\x_k\}$ be generated by the Parallel Flipping Trust-Region algorithm under \cref{ass:smoothness,ass:hessian,ass:accuracy,ass:model_management}. Then
\begin{equation}
    \liminf_{k\to\infty}\|\nabla f(\x_k)\|=0.
\end{equation}
\end{theorem}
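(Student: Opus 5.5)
The proof is the classical contradiction argument for first-order convergence of derivative-free trust-region methods, applied to the sequence of synchronized global incumbents. Suppose the conclusion fails. Then there exist $\epsilon>0$ and $K$ such that $\|\nabla f(\x_k)\|\ge 2\epsilon$ for all $k\ge K$. The fully linear bound \cref{eq:err_g} from \cref{lem:fully_linear} gives $\|\g_k\|\ge 2\epsilon-\kappa_{eg}\Delta_k$. Hence, whenever $\Delta_k\le \epsilon/\kappa_{eg}$, the model gradient satisfies $\|\g_k\|\ge\epsilon$.

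\textbf{Step 1: small radii give successful steps.} For such $k$, \cref{lem:tcg_decrease} together with \cref{ass:hessian} yields
\[
pred_k\ge \tfrac12\epsilon\min(\Delta_k,\epsilon/\kappa_B).
\]
By \cref{eq:err_f}, applied both at $\x_k$ and at $\x_k+\dd_k$,
\[
|ared_k-pred_k|\le 2\kappa_{ef}\Delta_k^2.
\]
Therefore
\[
|\rho_k-1|\le \frac{4\kappa_{ef}\Delta_k}{\epsilon}
\quad\text{whenever }\Delta_k\le\epsilon/\kappa_B .
\]
It follows that $\rho_k\ge\eta_2$ as soon as
\[
\Delta_k\le\bar\Delta:=\min\!\left(\frac{\epsilon}{\kappa_{eg}},\ \frac{\epsilon}{\kappa_B},\ \frac{(1-\eta_2)\epsilon}{4\kappa_{ef}}\right).
\]
At such an iteration the radius is not decreased. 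The update rules then give the lower bound
\[
\Delta_k\ge\min(\gamma_{\rm dec}\bar\Delta,\ \Delta_K)
\]
for all $k\ge K$, which is also consistent with the safeguard $\Delta_{\min}$.

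\textbf{Step 2: infinitely many successes.} Suppose only finitely many iterations were successful. Then the radius would be contracted at every iteration from some index on, so $\Delta_k\to0$. This contradicts Step 1, because once $\Delta_k\le\bar\Delta$ the step is forced to be successful.

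\textbf{Step 3: contradiction with boundedness below.} On each successful iteration $k\ge K$, \cref{ass:model_management} gives
\[
f(\x_k)-f(\x_{k+1})\ge\eta_1 pred_k\ge\tfrac12\eta_1\epsilon\min(\Delta_{\rm lb},\epsilon/\kappa_B)>0,
\]
where $\Delta_{\rm lb}$ is the lower bound from Step 1. On all other iterations monotonicity gives $f(\x_{k+1})\le f(\x_k)$. Summing over the infinitely many successes forces $f(\x_k)\to-\infty$. This contradicts the lower bound on $f$ in \cref{ass:smoothness} and proves $\liminf\|\nabla f(\x_k)\|=0$.

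\textbf{Main obstacle.} The delicate point is identifying the abstract iteration index $k$ with the parallel algorithm. One outer iteration consists of $P$ workers each taking $S$ inner steps, followed by a synchronization. I would index $k$ by outer iterations and define $(\x_k,\Delta_k)$ as the winning worker's incumbent and radius. The risk is that the winner's radius may have been contracted during its inner loop even though the incumbent improved; this is the effect the $\gamma_{\rm dec}$ factor in Step 1 absorbs.

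To handle this, I would note that each individual inner step, on every worker, satisfies the three ingredients used above. First, by \cref{thm:poisedness} and \cref{lem:fully_linear}, the flipped model, and by \cref{ass:accuracy} each subsequently updated model, is fully linear on its current ball. Second, \cref{lem:tcg_decrease} gives Cauchy decrease. Third, \cref{ass:model_management} makes the incumbent monotone across synchronizations.

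Consequently, every inner step taken with radius at most $\bar\Delta$ is successful. Radii therefore cannot fall below $\gamma_{\rm dec}\bar\Delta$ during the inner loop either, and this bound is inherited by the broadcast radius. Every inner success at the winning worker decreases $f$ by the fixed amount in Step 3, and monotonicity carries that decrease over to the global sequence. A remaining gap is whether the winning worker must achieve at least one success per outer iteration. If not, I would argue with the winning worker's own trajectory, concatenated across syncs, as the underlying sequence.
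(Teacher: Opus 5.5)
Your Steps 1--3 follow the same route as the paper's proof: small radius $\Rightarrow$ very successful, hence a uniform lower bound on $\Delta_k$, hence infinitely many successes with a fixed decrease, contradicting boundedness below. After rescaling $\epsilon$, your $\bar\Delta$ coincides with the paper's.

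However, Step 3 has a genuine gap, more serious than the indexing issue you flag, and the paper's proof makes the same leap. The bound $\|\g_k\|\ge\epsilon$ was derived only for $\Delta_k\le\epsilon/\kappa_{eg}$. On a successful iteration with larger radius, \eqref{eq:err_g} gives only $\|\g_k\|\ge 2\epsilon-\kappa_{eg}\Delta_k$, which may be nonpositive. So $pred_k$, and with it the guaranteed decrease $\eta_1\,pred_k$, can be arbitrarily small. The radius bound $\Delta_{\rm lb}$ does not help, because the deficient factor in \cref{lem:tcg_decrease} is $\|\g_k\|$, not $\Delta_k$. Nor can you restrict to successes with $\Delta_k\le\bar\Delta$: nothing rules out the radius staying above $\bar\Delta$ forever, with failures alternating with successes at which the model gradient is tiny. (Summing $\eta_1\,pred_k$ over successes only shows $\|\g_k\|\to0$ along successes. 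That is compatible with $\|\nabla f(\x_k)\|\ge2\epsilon$ provided those successes have $\Delta_k>\epsilon/\kappa_{eg}$.)

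The missing idea is the \emph{criticality step} of Conn, Scheinberg and Vicente. Whenever $\|\g_k\|\le\epsilon_c$, the model is made fully linear on a ball of radius at most $\mu\|\g_k\|$. This gives $\|\nabla f(\x_k)\|\le(1+\kappa_{eg}\mu)\|\g_k\|$, and hence, under your contradiction hypothesis, $\|\g_k\|\ge\min\{\epsilon_c,\,2\epsilon/(1+\kappa_{eg}\mu)\}$ at every iteration. With that uniform bound, and the radius lower bound adjusted for the criticality reduction of $\Delta_k$, your argument closes. Neither Algorithm~\ref{alg:framework} nor \cref{ass:accuracy,ass:model_management} provides such a mechanism, so it must be added as an algorithmic step or an explicit assumption.

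Two smaller points.

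\emph{The radius floor.} Step 2's conclusion $\Delta_k\to0$ uses the idealized contraction $\Delta_{k+1}=\gamma_{\rm dec}\Delta_k$. With the floor $\Delta_{\min}=10^{-12}$ of Algorithm~\ref{alg:framework}, the radius only tends to $\Delta_{\min}$. The contradiction then needs $\Delta_{\min}<\bar\Delta$, which fails once $\epsilon$ is small. So the argument applies only to the unsafeguarded update, as the paper tacitly assumes.

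\emph{Inner versus outer indexing.} This discussion goes beyond the paper, which treats $k$ as an abstract trust-region iteration. Note, however, that the contradiction hypothesis gives $\|\nabla f\|\ge2\epsilon$ only at synchronized incumbents, not at inner-loop centers. So ``every inner step with radius at most $\bar\Delta$ is successful'' needs a transfer via the Lipschitz continuity in \cref{ass:smoothness}. That transfer is valid only when $\Delta_k$ is small enough that all inner centers stay within $O(\Delta_k)$ of $\x_k$. Done that way, the outer-level radius bound becomes $\gamma_{\rm dec}^{S}$ times a threshold rather than $\gamma_{\rm dec}\bar\Delta$. Your fallback of passing to the concatenated winners' trajectory would yield a $\liminf$ along that trajectory. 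That does not imply the stated $\liminf$ along its subsequence of synchronized incumbents.
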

\begin{proof}
Assume for contradiction that there is an $\epsilon>0$ such that $\|\nabla f(\x_k)\|\ge\epsilon$ for all sufficiently large $k$. For any iteration with
\begin{equation*}
\Delta_k\le \bar\Delta := \min\left\{\frac{\epsilon}{2\kappa_{eg}},\frac{\epsilon}{2\kappa_B},\frac{(1-\eta_2)\epsilon}{8\kappa_{ef}}\right\},
\end{equation*}
full linearity gives $\|\g_k\|\ge\epsilon/2$, and the Cauchy decrease from \cref{lem:tcg_decrease} yields $pred_k\ge \epsilon\Delta_k/4$. Therefore,
\begin{equation*}
|\rho_k-1|\le\frac{|f(\x_k)-Q_k(\x_k)|+|f(\x_k+\dd_k)-Q_k(\x_k+\dd_k)|}{pred_k}
\le \frac{8\kappa_{ef}\Delta_k}{\epsilon}\le 1-\eta_2.
\end{equation*}
Thus every sufficiently small-radius iteration is highly successful and cannot trigger a radius contraction. Combining this small-radius success property with the standard trust-region radius-update argument gives infinitely many successful iterations whose radii are bounded below by a positive constant. On each such iteration, \cref{ass:model_management} gives an actual decrease bounded below by a positive constant proportional to $\epsilon$. This contradicts monotonicity of $f(\x_k)$ and the lower boundedness of $f$. Hence the contradiction hypothesis is false, proving the result.
\end{proof}

A standard consequence is an upper bound on the number of iterations to reach an $\varepsilon$-stationary point.

\begin{corollary}[Iteration Complexity]
\label{cor:complexity}
Under the assumptions of \cref{thm:global_convergence}, for any $\varepsilon>0$ there exists $K=O\!\left(\varepsilon^{-2}\right)$ such that the algorithm reaches an iterate $\x_k$ with $\|\nabla f(\x_k)\|\le\varepsilon$ within $K$ successful iterations.
\end{corollary}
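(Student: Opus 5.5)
The plan is the standard trust-region complexity argument: bound the cost per successful iteration from below, then sum. I will use only \cref{lem:tcg_decrease}, \cref{ass:model_management}, and the lower bound $f_{\rm low}$ on $f$ from \cref{ass:smoothness}.

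The key inequality is the following. Suppose iteration $k$ is successful and $\|\nabla f(\x_k)\|>\varepsilon$. Then we aim for
\begin{equation*}
f(\x_k)-f(\x_{k+1})\;\ge\; c\,\varepsilon^2
\end{equation*}
with $c>0$ independent of $k$ and $\varepsilon$.

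First, convert the true gradient bound into a model gradient bound. We reuse the threshold idea from the proof of \cref{thm:global_convergence}. Full linearity (\cref{lem:fully_linear}, eq.~\cref{eq:err_g}) gives $\|\g_k\|\ge \|\nabla f(\x_k)\|-\kappa_{eg}\Delta_k$. This yields $\|\g_k\|\ge\varepsilon/2$ provided $\Delta_k\le \varepsilon/(2\kappa_{eg})$. The large-radius case is handled below.

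Second, plug this into the Cauchy decrease of \cref{lem:tcg_decrease} and the acceptance rule of \cref{ass:model_management}:
\begin{equation*}
f(\x_k)-f(\x_{k+1})\ge \frac{\eta_1\varepsilon}{4}\min\left(\Delta_k,\frac{\varepsilon}{2\kappa_B}\right).
\end{equation*}
So it suffices to show $\Delta_k\ge c'\varepsilon$ on all iterations before $\varepsilon$-stationarity is reached.

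Third, establish this radius bound. The proof of \cref{thm:global_convergence} shows that every iteration with $\Delta_k\le\bar\Delta(\varepsilon)$ is highly successful and so cannot shrink the radius. Here $\bar\Delta(\varepsilon)=\min\{\varepsilon/(2\kappa_{eg}),\varepsilon/(2\kappa_B),(1-\eta_2)\varepsilon/(8\kappa_{ef})\}$ is linear in $\varepsilon$. Since a contraction divides the radius by $\gamma$, a standard induction gives
\begin{equation*}
\Delta_k\ge\min\{\Delta_{\rm init},\,\bar\Delta(\varepsilon)/\gamma\}=:\Delta_\varepsilon
\end{equation*}
for all $k$ before the first $\varepsilon$-stationary iterate. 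One caveat: the parallel synchronization resets the radius to the winning worker's radius. That radius is itself produced by the same update rule, so the bound transfers, and I will state this explicitly.

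Once $\Delta_\varepsilon$ is in hand, the large-radius case from the first step is also settled. If $\Delta_k>\varepsilon/(2\kappa_{eg})$, I would not try to control $\|\g_k\|$ directly. Instead I use the fact that sufficiently small radii force success, so successful iterations with small $\Delta_k$ already give a decrease of order $\varepsilon^2$. The cleaner route is to count only successful iterations with $\Delta_k\le\bar\Delta$ and bound the rest through the radius dynamics. I expect this bookkeeping to be the main obstacle.

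Fourth, sum the decreases. Incumbent monotonicity lets the per-iteration decreases add up:
\begin{equation*}
|\mathcal{S}_K|\,c\,\varepsilon^2\le f(\x_0)-f_{\rm low},
\end{equation*}
where $\mathcal{S}_K$ denotes the successful iterations among the first $K$. This gives $|\mathcal{S}_K|\le (f(\x_0)-f_{\rm low})/(c\varepsilon^2)=O(\varepsilon^{-2})$, which is the claim stated in terms of successful iterations.

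If the hard case in the third step resists, the fallback is to use the fully linear function bound \cref{eq:err_f} on large-radius successful steps. Together with \cref{ass:hessian}, this still gives $pred_k\ge(\varepsilon/4)\min(\Delta_\varepsilon,\varepsilon/\kappa_B)$.
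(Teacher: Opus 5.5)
\textbf{Verdict: there is a genuine gap, at exactly the step you flag (successful iterations with large radius).} Your outline matches the paper's: a decrease of order $\varepsilon\bar\Delta=\Theta(\varepsilon^2)$ per successful iteration, summed against $f(\x_0)-f_{\rm low}$. Neither of your two routes closes the large-radius case.

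\textbf{Why your two routes fail.} Counting only successful iterations with $\Delta_k\le\bar\Delta$ and handling the rest ``through the radius dynamics'' cannot work. Successful steps with $\eta_1\le\rho_k<\eta_2$, and highly successful steps at the cap $\Delta_{\max}$, leave the radius unchanged. So radius bookkeeping can bound the number of unsuccessful iterations in terms of successful ones, but never the number of successful iterations with $\Delta_k>\bar\Delta$. The fallback is also wrong. $pred_k$ is controlled only through $\|\g_k\|$. On $\mathcal{B}(\x_k,\Delta_k)$ the bound \cref{eq:err_f} has error $\kappa_{ef}\Delta_k^2$, and restricting it to the sub-ball $\mathcal{B}(\x_k,\Delta_\varepsilon)$ still leaves an error of size $\kappa_{ef}\Delta_k^2$. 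Via the usual Taylor argument this yields only $\|\nabla f(\x_k)-\g_k\|=O(\Delta_k)$, i.e.\ nothing beyond \cref{eq:err_g}. Once $\Delta_k\ge\|\nabla f(\x_k)\|/\kappa_{eg}$, the model gradient may be arbitrarily small.

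\textbf{The listed assumptions allow this.} Let $\nabla f\equiv 2\varepsilon\boldsymbol{e}_1$ near the iterates, with $f$ globally $2\varepsilon$-Lipschitz, bounded below, and with Lipschitz gradient. Take $Q_k(\x_k+\dd)=f(\x_k)+\mu_k\boldsymbol{e}_1^\top\dd+\frac{\kappa_B}{2}\|\dd\|^2$ with $0<\mu_k\le\varepsilon$ summable, and $\Delta_0$ large. Then:
\begin{itemize}
\item $\dd_k=-(\mu_k/\kappa_B)\boldsymbol{e}_1$ (interior), $pred_k=\mu_k^2/(2\kappa_B)$ and $ared_k=2\varepsilon\mu_k/\kappa_B$.
\item Hence $\rho_k=4\varepsilon/\mu_k\ge 4$, so every step is highly successful and satisfies \cref{ass:model_management}.
\item The models are fully linear on $\mathcal{B}(\x_k,\Delta_k)$ whenever $\Delta_k\ge 2\varepsilon/\kappa_{eg}$ and $\kappa_{ef}\ge\kappa_B/2+3\varepsilon/\Delta_k$.
\item The total decrease is finite, yet $\|\nabla f(\x_k)\|=2\varepsilon$ for all $k$.
\end{itemize}

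\textbf{What is missing.} You need a lower bound on $\|\g_k\|$ that is uniform in $\Delta_k$. In the Conn--Scheinberg--Vicente framework this comes from a criticality step. Whenever $\|\g_k\|\le\epsilon_c$, the radius is reduced to at most $\mu\|\g_k\|$ and the model is made fully linear on that ball. Then $\|\nabla f(\x_k)\|\le(1+\kappa_{eg}\mu)\|\g_k\|$, so $\|\g_k\|\ge\min\{\epsilon_c,\varepsilon/(1+\kappa_{eg}\mu)\}$ on every iteration with $\|\nabla f(\x_k)\|>\varepsilon$. Use this in place of your first step, and re-derive your third-step radius lower bound from it, allowing for the criticality reductions. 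Your remaining steps then give the $O(\varepsilon^{-2})$ count. An explicit extra assumption such as $\Delta_k\le\mu\|\g_k\|$ on successful iterations would serve the same purpose.

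\textbf{The paper's proof has the same hole.} It asserts that every successful iteration with $\Delta_k\ge\bar\Delta(\epsilon)$ decreases $f$ by at least $\eta_1\epsilon\bar\Delta/4$, which is exactly your unproved step. The same assertion closes the proof of \cref{thm:global_convergence}. So the statement as written needs a safeguard of this kind to be provable.
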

\begin{proof}
Following the argument of \cref{thm:global_convergence}, each successful iteration with $\Delta_k\ge \bar\Delta(\epsilon)$ reduces the objective by at least $\eta_1\epsilon\bar\Delta/4$. Since $f$ is bounded below (\cref{ass:smoothness}), the number of such iterations cannot exceed $K = O(\epsilon^{-2})$. By \cref{ass:model_management} the iteration sequence contains infinitely many successful steps; the result follows by choosing $\epsilon$ as the desired tolerance.
\end{proof}

These results fit within the DFO convergence theory of Conn, Scheinberg, and Vicente~\cite{Conn2009}: the algorithm inherits standard first-order guarantees because the flipping operation preserves the poisedness constant (\cref{thm:poisedness}) and the fully linear model bounds (\cref{lem:fully_linear}), while the TCG solver supplies the sufficient-decrease property required for the trust-region argument.

\subsection{Complexity}
\label{subsec:complexity_analysis}
We analyze the computational, memory, and communication costs in a distributed setting.

Consider the dimension of the KKT system $p = m+n+1$. With $m \approx 2n+1$, solving the dense KKT system from scratch using LU or QR factorization requires approximately $\frac{2}{3}p^3 \approx 18n^3$ floating-point operations (FLOPs). Our framework avoids this cubic cost across all three phases. In the initialization phase, constructing the initial interpolation set with an orthogonal cross-stencil yields a highly sparse block-diagonal KKT matrix; its inverse $\boldsymbol{H}_0$ is computed analytically using vector scaling and diagonal matrix additions, requiring $\mathcal{O}(n^2)$ FLOPs. In the model updating phase, during parallel branching (flipping) and sequential single-point replacement, the matrix inverse is maintained via the Sherman-Morrison-Woodbury identity; the dominant cost is the matrix-vector multiplication $\boldsymbol{H}\boldsymbol{w}$ at $2p^2 \approx 18n^2$ FLOPs, with rank-2 outer products also in $\mathcal{O}(n^2)$. In the subproblem solving phase, the TCG method relies solely on Hessian-vector multiplications ($\nabla^2 Q \cdot \boldsymbol{v}$) and vector dot products, completing each TCG iteration in $\mathcal{O}(n^2)$ operations when $m=\mathcal{O}(n)$. Consequently, if $N_{\rm CG}$ denotes the number of TCG iterations used per subproblem, the aggregate computational complexity per machine per inner iteration is $\mathcal{O}(N_{\rm CG} n^2)$, and per outer iteration it is $\mathcal{O}(S N_{\rm CG} n^2)$. The latter reduces to $\mathcal{O}(S n^2)$ when $N_{\rm CG}$ is bounded independently of $n$ by truncation. For concreteness, at $n=100$ and $n=1000$, factoring the KKT system from scratch costs $18n^3 \approx 1.8\times 10^7$ and $1.8\times 10^{10}$ FLOPs, respectively. Under our framework, a single matrix-vector update costs $18n^2 \approx 1.8\times 10^5$ and $1.8\times 10^7$ FLOPs, a reduction factor of $n$. At $n=1000$, this is a three-order-of-magnitude saving per update.

The KKT inverse matrix $\boldsymbol{H}$ is the dominant data structure, requiring $\mathcal{O}((m+n+1)^2) = \mathcal{O}(n^2)$ storage per machine. A single $\boldsymbol{H}$ of size $p\times p$ occupies roughly $8p^2$ bytes in double precision (about $0.7$\,MB for $n=100$ and $72$\,MB for $n=1000$). Each of the $P$ machines stores a local copy $\boldsymbol{H}^{(i)}$, so total memory scales by $P$; at $P\le 8$ and $n\le 1000$, the aggregate footprint stays under $1$\,GB. This per-machine memory cost is identical to the serial case.

In the distributed setting, the outer iteration consists of an initial broadcast of the baseline state and a final gather of the best local solution. Broadcasting $\boldsymbol{H}$ and the trust-region radius $\Delta$ costs $\mathcal{O}(p^2 + 1) = \mathcal{O}(n^2)$ data per outer iteration. The gather operation (e.g., \texttt{MPI\_Allreduce}) transmits $\mathcal{O}(1)$ data---a single scalar for the function value and a vector of length $n$ for the best point. The inner-loop strategy with $S=10$ amortizes this communication cost: each machine performs $S$ local trust-region steps and $S$ matrix updates between synchronizations, so the communication-to-computation ratio is $\mathcal{O}(n^2 / (S N_{\rm CG} n^2)) = \mathcal{O}(1 / (S N_{\rm CG}))$. For $S=10$ and typical $N_{\rm CG}\approx 10$, communication accounts for roughly $1\%$ of the outer-iteration cost.
\section{Numerical Results}
\label{sec:experiments}

To evaluate the practical performance of the proposed parallel flipping mechanism, we conduct extensive numerical experiments on a standard derivative-free optimization benchmark set.

We compare the proposed algorithm under different parallel configurations ($P \in \{2, 4, 8\}$) against QARSTA, an established derivative-free optimization solver. All experiments are executed on a Windows workstation equipped with an Intel Core Ultra 9 285H CPU and 32GB RAM. The algorithms are implemented in Python 3.12, utilizing NumPy for core matrix operations and low-rank updates.

To ensure a fair comparison, the maximum computational budget for each test problem is strictly limited to $N_{\max} = 100(n+1)$ function evaluations, where $n$ is the problem dimension. An algorithm is considered to have successfully solved an instance if the normalized function reduction satisfies
\begin{equation}
\label{eq:f_acc}
f_{\mathrm{acc}}^{N} := \frac{f(\x_{0}) - f(\x_{N})}{f(\x_{0}) - f(\x_{\text{best}})} \ge 1-\tau,
\end{equation}
where $\x_{0}$ is the starting point, $\x_{N}$ is the best point found after $N$ evaluations, and $\x_{\text{best}}$ is the known optimum. We evaluate the solvers across four tolerance levels: $\tau \in \{10^{-1}, 10^{-2}, 10^{-3}, 10^{-4}\}$.

The test set comprises 530 problems from the well-known Mor\'{e} and Wild derivative-free optimization benchmark collection. This suite covers dimensions ranging from $n=2$ to $100$ and includes a diverse array of objective function landscapes, featuring smooth, non-smooth, and noisy variations (e.g., deterministic noise with variance $10^{-4}$).

We aggregate the experimental results using two standard tools. Performance profiles $\rho(\alpha)$ measure the relative efficiency of an algorithm as the fraction of problems for which its number of function evaluations is within a factor $\alpha$ of the best-performing solver. Data profiles $\delta(\beta)$ illustrate budget scalability as the fraction of problems solved within a normalized computational budget $\beta = N_f/(n+1)$.

\subsection{Setup and Visualization}
\label{sec:visualization}

To provide an intuitive illustration of the spatial sampling and convergence behavior of the Parallel Flipping algorithm, we conduct a brief visualization test on the foundational 2D Sphere function ($f(\x) = x_1^2 + x_2^2$). The experiment is configured with a parallel dimension of $P=4$, an initial point located at $(4.0, 4.0)$, and an initial trust-region radius $\Delta_0 = 2.0$.

\begin{figure}[htbp]
  \centering
  \begin{minipage}{0.45\textwidth}
    \centering
    \includegraphics[width=\linewidth]{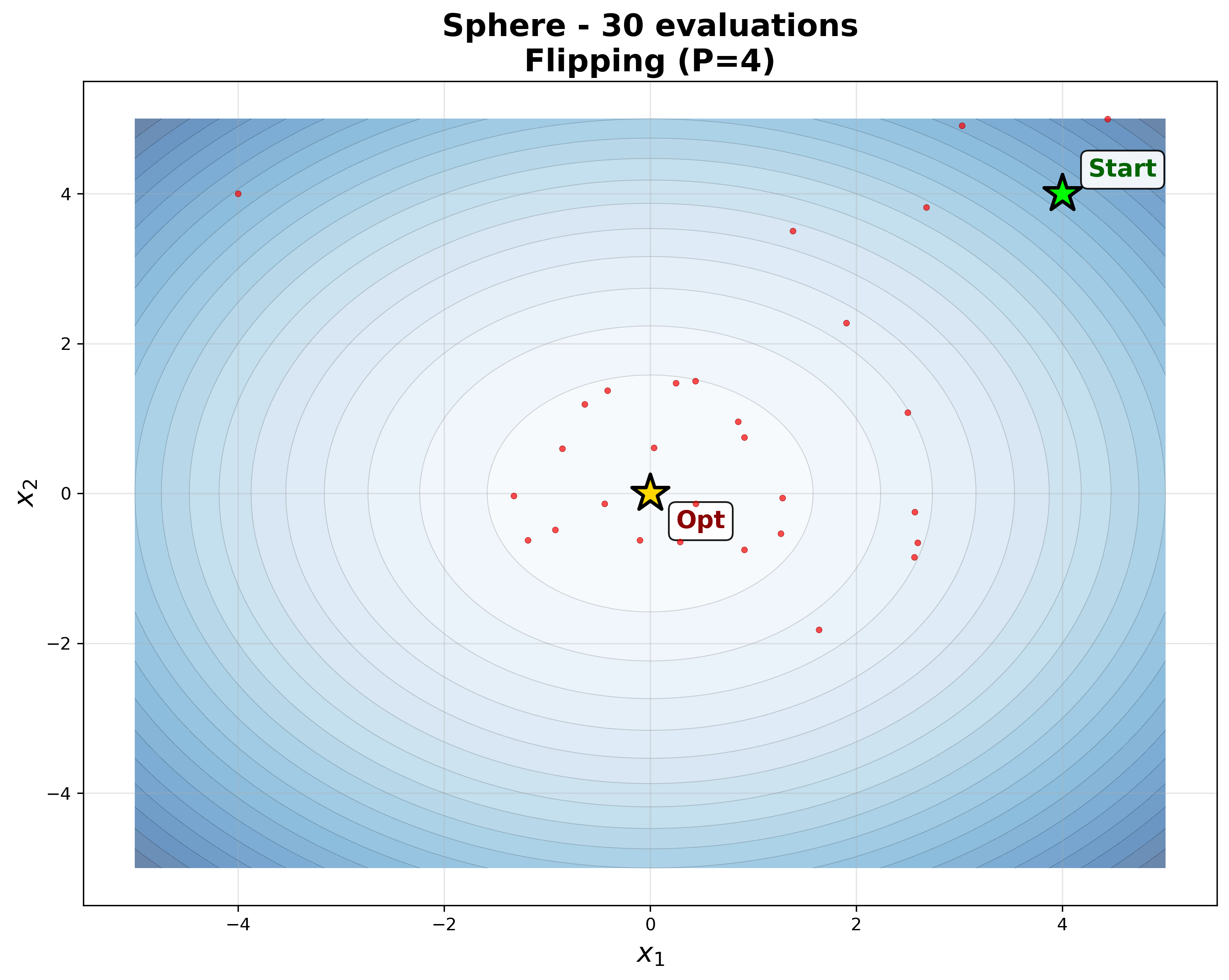}
  \end{minipage}\hfill
  \begin{minipage}{0.45\textwidth}
    \centering
    \includegraphics[width=\linewidth]{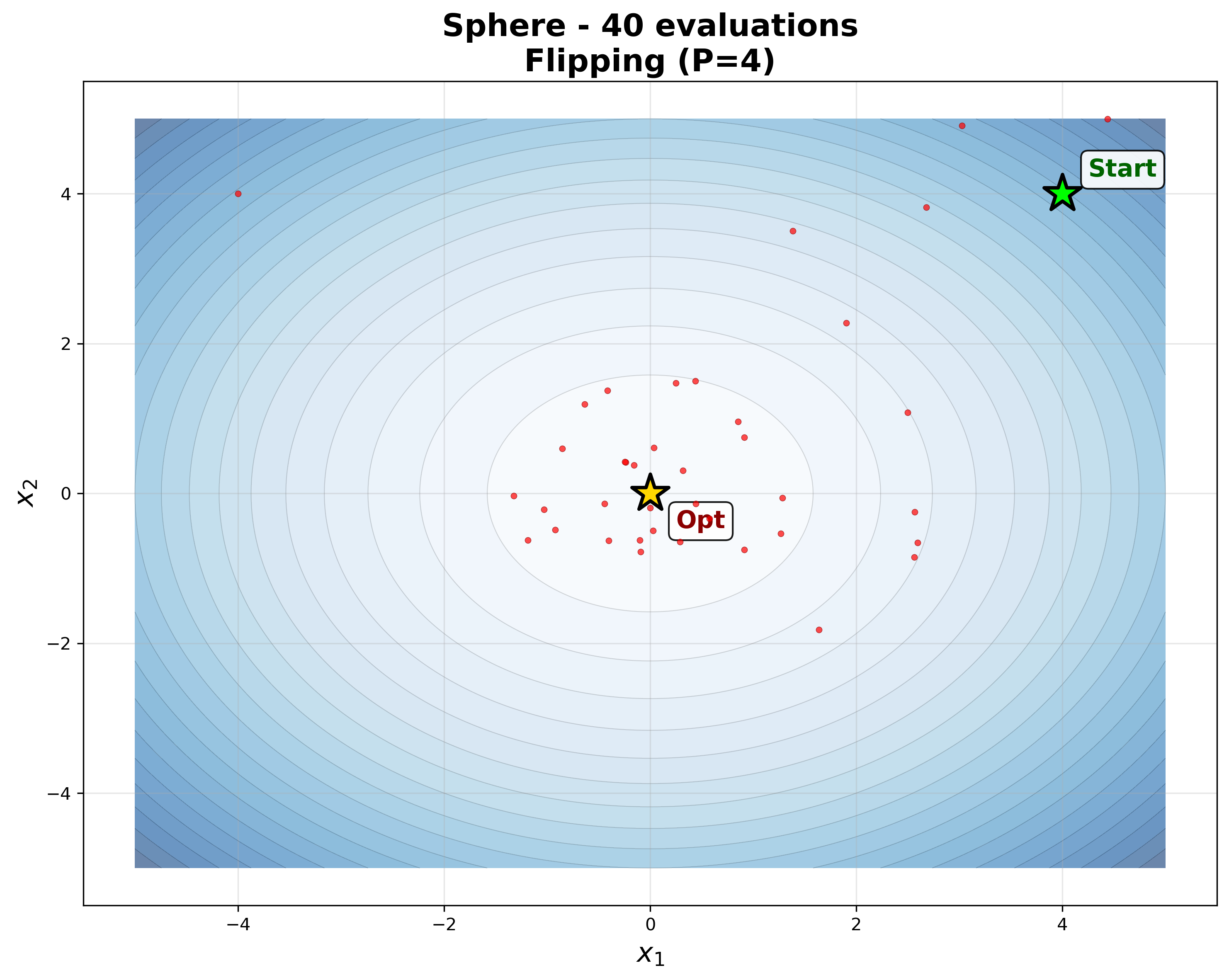}
  \end{minipage}
  \vspace{0.1cm}
  \begin{minipage}{0.45\textwidth}
    \centering
    \includegraphics[width=\linewidth]{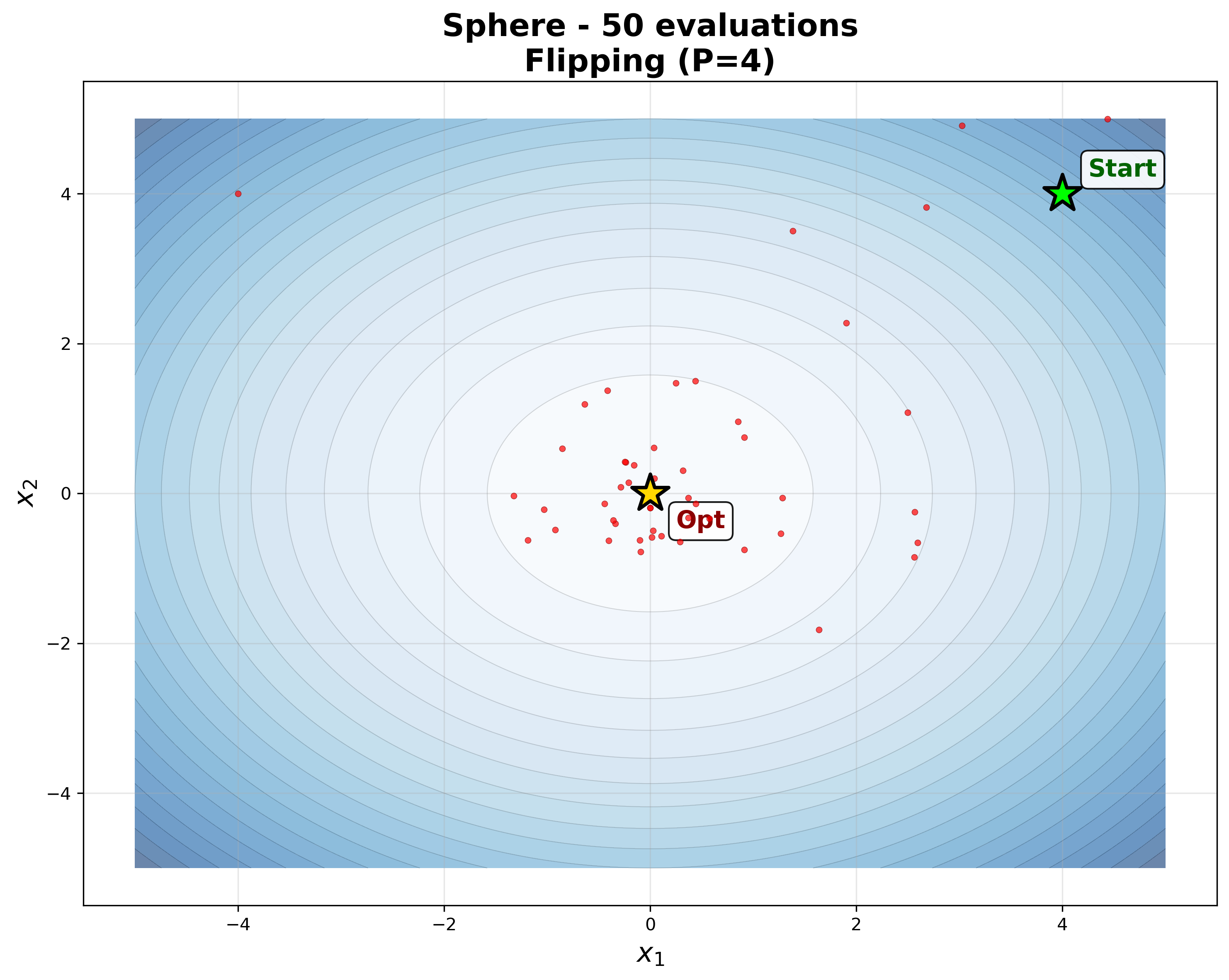}
  \end{minipage}\hfill
  \begin{minipage}{0.45\textwidth}
    \centering
    \includegraphics[width=\linewidth]{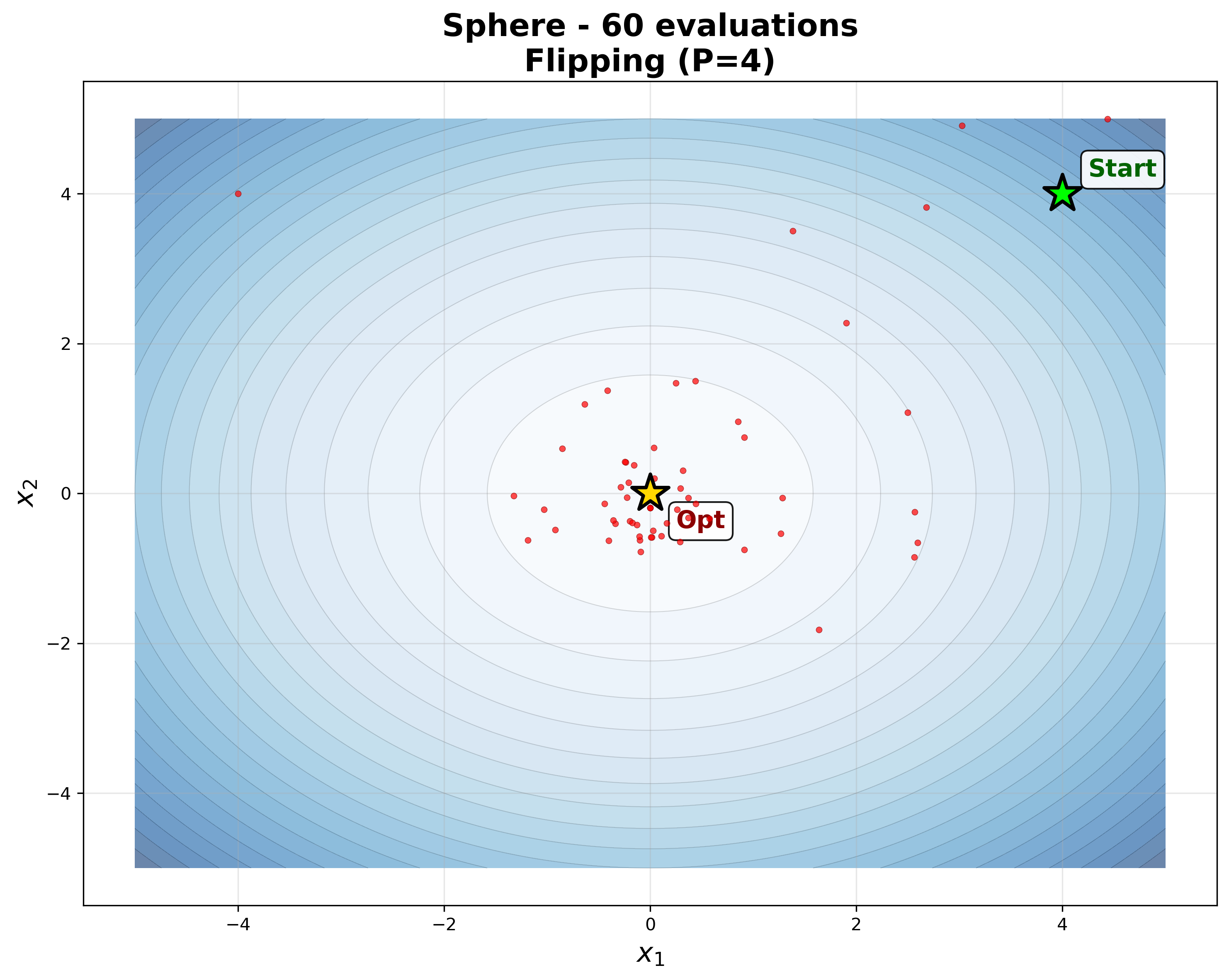}
  \end{minipage}
  \caption{Snapshots of the evaluated points (red dots) generated by the Flipping algorithm ($P=4$) on the 2D Sphere function at varying evaluation budgets ($N_f = 30, 40, 50, \text{and } 60$). The global optimum is marked by the yellow star at $(0,0)$.}
  \label{fig:scatter_evolution}
\end{figure}

\Cref{fig:scatter_evolution} captures the spatial distribution of all evaluated points (red dots) as the computational budget increases. In the initial phase (30 evaluations), the evaluated points are widely scattered across the contour space, reflecting the multi-directional exploration induced by the flipping mechanism. As iterations progress to 40 evaluations, newly generated points begin to cluster toward the central region, indicating that the updated quadratic models have captured the general descent direction. By 50 and 60 evaluations, the vast majority of points are densely concentrated around the global optimum $(0,0)$.

While basic, this visual test provides a straightforward confirmation of the algorithm's fundamental dynamics: it balances broad spatial exploration in the early stages with precise local exploitation as it converges to the minimum.

\begin{figure}[htbp]
  \centering
  \begin{minipage}{0.40\textwidth}
    \centering
    \includegraphics[width=\linewidth]{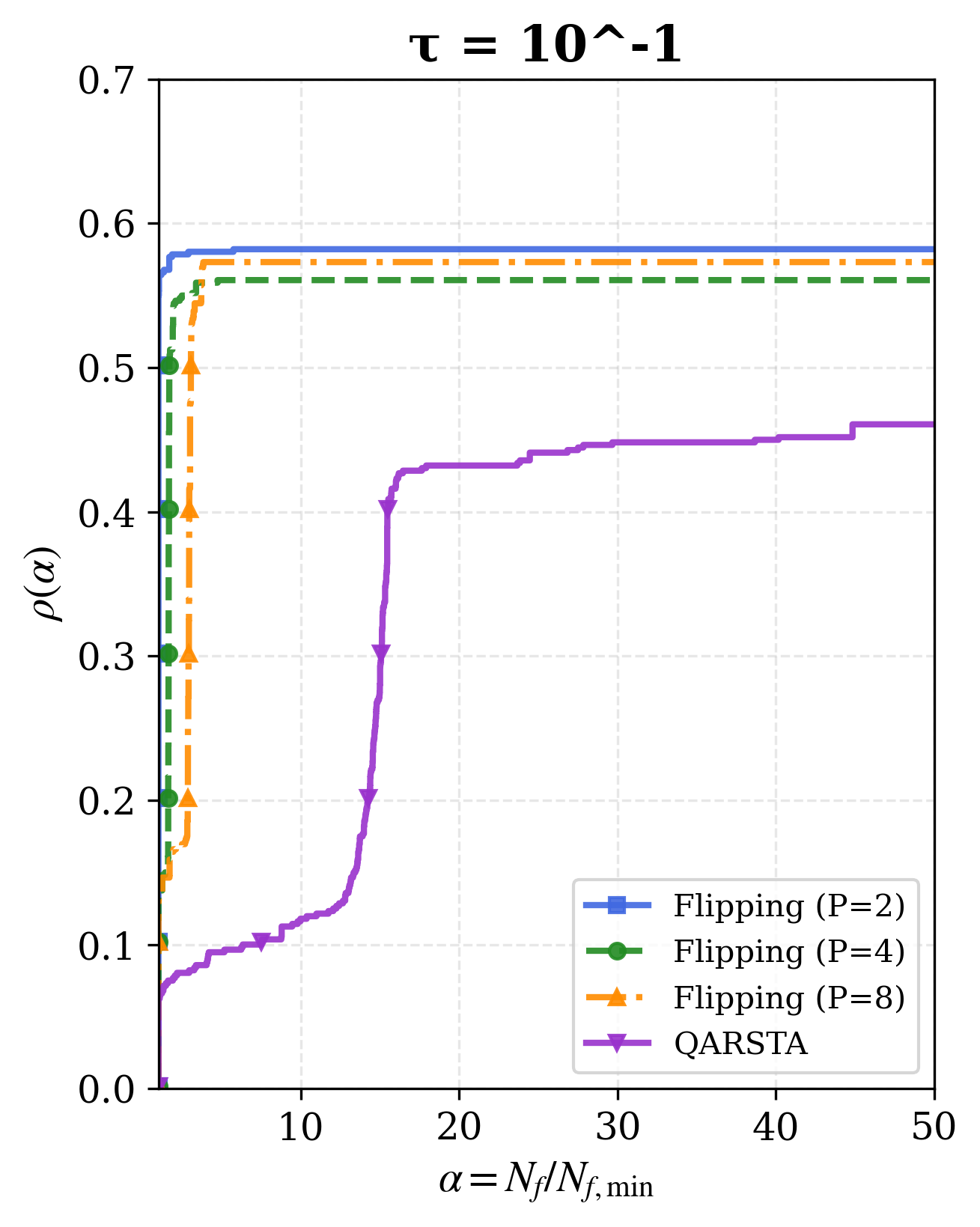}
  \end{minipage}\hfill
  \begin{minipage}{0.40\textwidth}
    \centering
    \includegraphics[width=\linewidth]{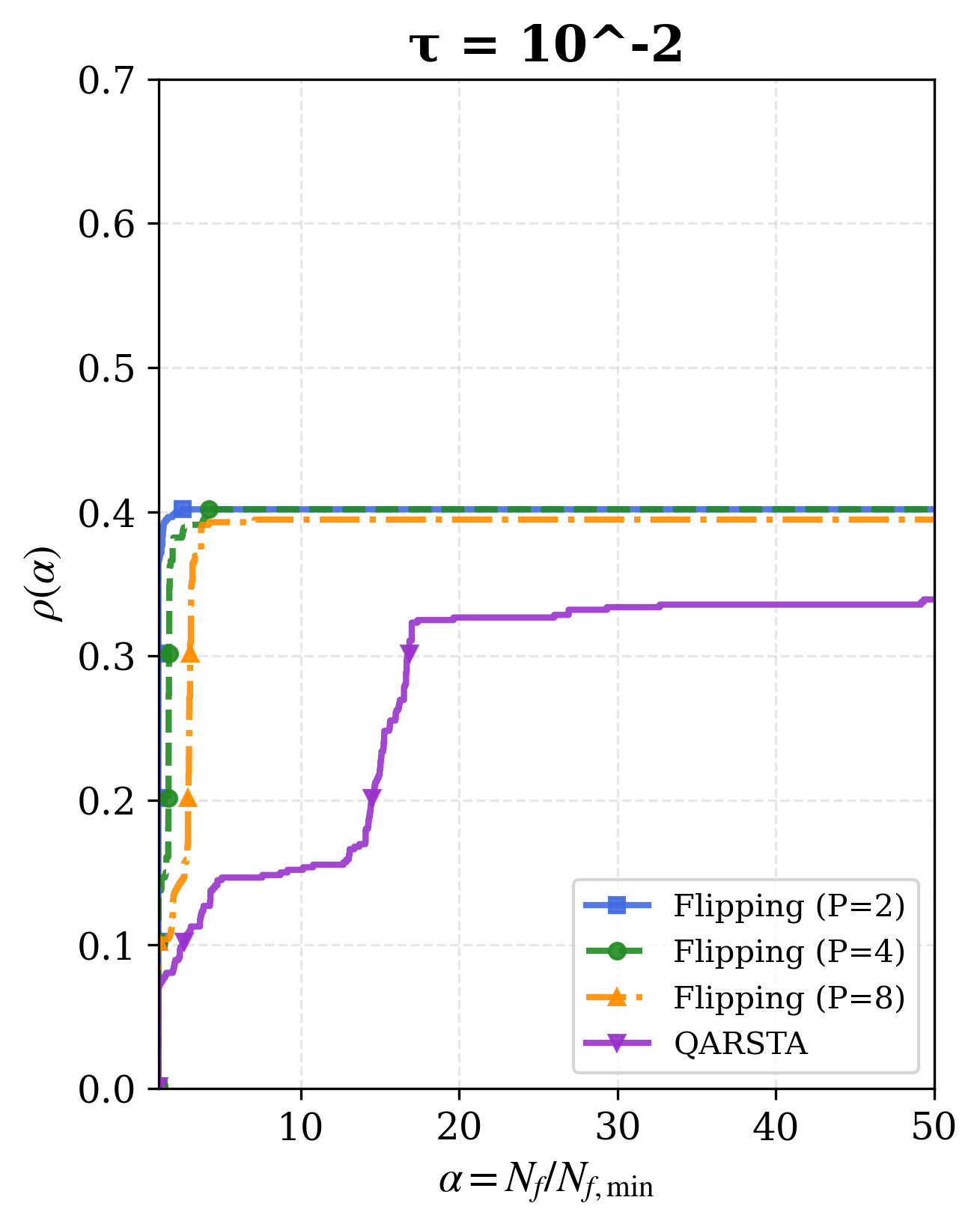}
  \end{minipage}
  \vspace{0.1cm}
  \begin{minipage}{0.40\textwidth}
    \centering
    \includegraphics[width=\linewidth]{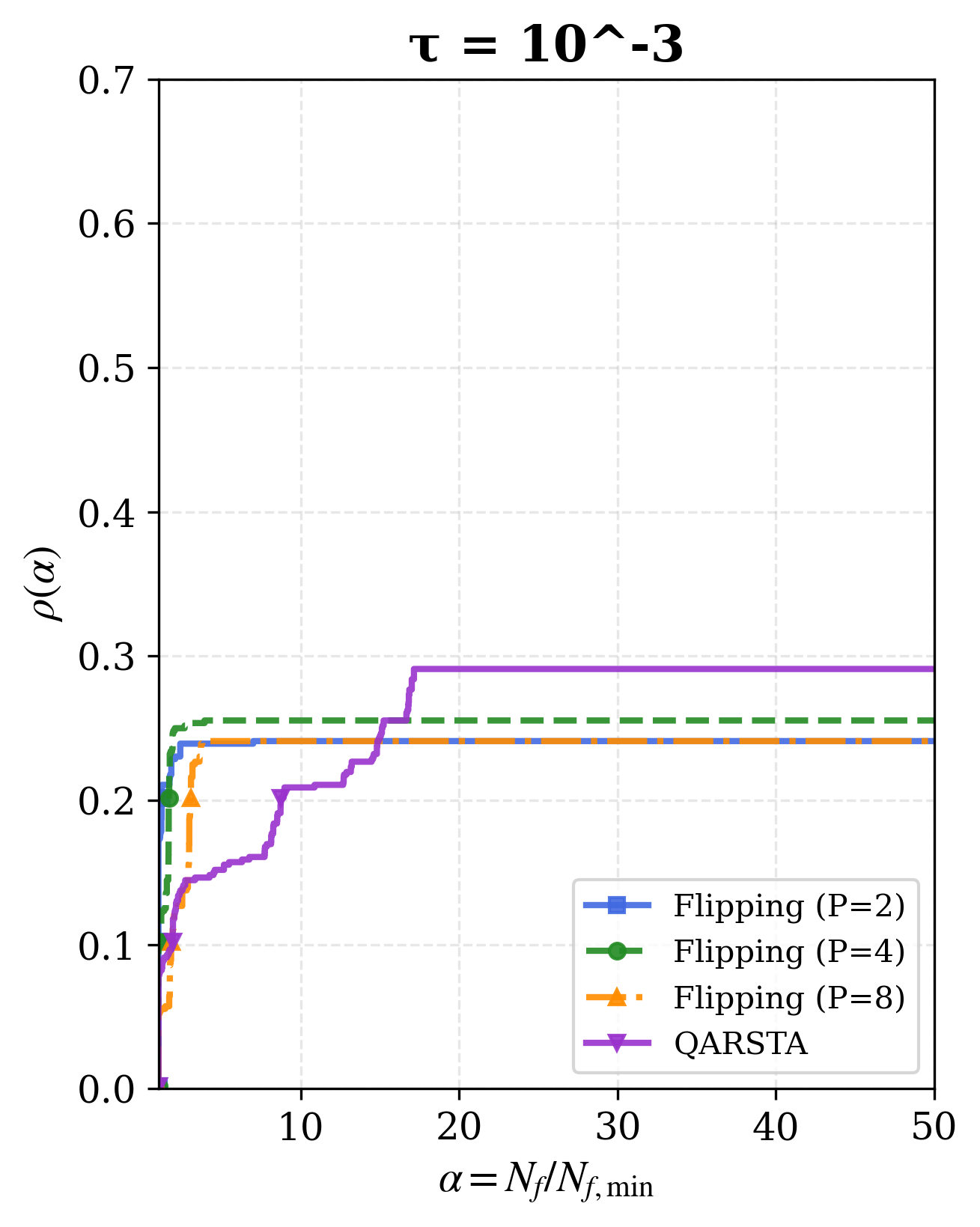}
  \end{minipage}\hfill
  \begin{minipage}{0.40\textwidth}
    \centering
    \includegraphics[width=\linewidth]{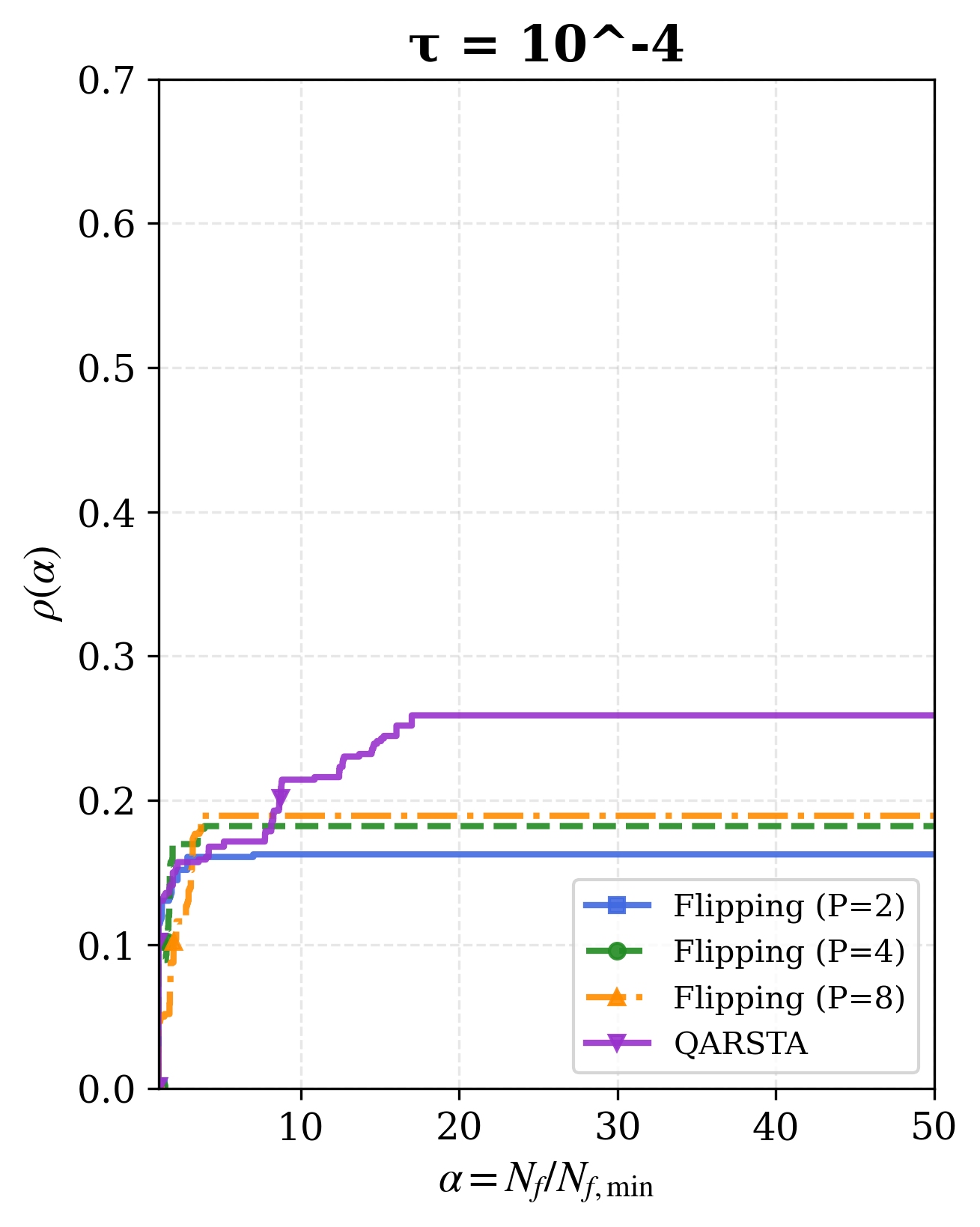}
  \end{minipage}
  \caption{Performance profiles $\rho(\alpha)$ for the 530 test problems under varying accuracy tolerance levels $\tau$. The x-axis indicates the relative performance ratio $\alpha = N_f/N_{f,\min}$.}
  \label{fig:perf_profiles}
\end{figure}

\Cref{fig:perf_profiles} presents the performance profiles for the tested algorithms. The results reveal a notable trade-off between the scale of parallel exploration ($P$) and the evaluation-based efficiency, which shifts depending on the strictness of the tolerance $\tau$.

For low to moderate precision requirements ($\tau = 10^{-1}$ and $10^{-2}$), the Flipping variant with $P=2$ achieves the highest y-intercept, indicating it has the greatest probability of being the most efficient solver. However, as the precision requirement increases ($\tau = 10^{-3}$), the $P=4$ configuration begins to outperform $P=2$. Under the most stringent tolerance ($\tau = 10^{-4}$), Flipping with $P=8$, despite requiring more evaluations initially, eventually surpasses both $P=2$ and $P=4$ in overall efficiency.

This phenomenon highlights the practical dynamics of distributed DFO. When locating a rough descent direction is sufficient (low precision), limiting the parallel breadth ($P=2$) prevents the algorithm from wasting function evaluations on secondary descent paths. Conversely, in the late stages of high-precision optimization, objective functions often exhibit flat regions or narrow valleys. Here, generating a wider variety of orthogonal flipped models ($P=8$) increases the likelihood of capturing subtle descent directions, compensating for the higher evaluation cost per iteration.

\begin{figure}[htbp]
  \centering
  \begin{minipage}{0.40\textwidth}
    \centering
    \includegraphics[width=\linewidth]{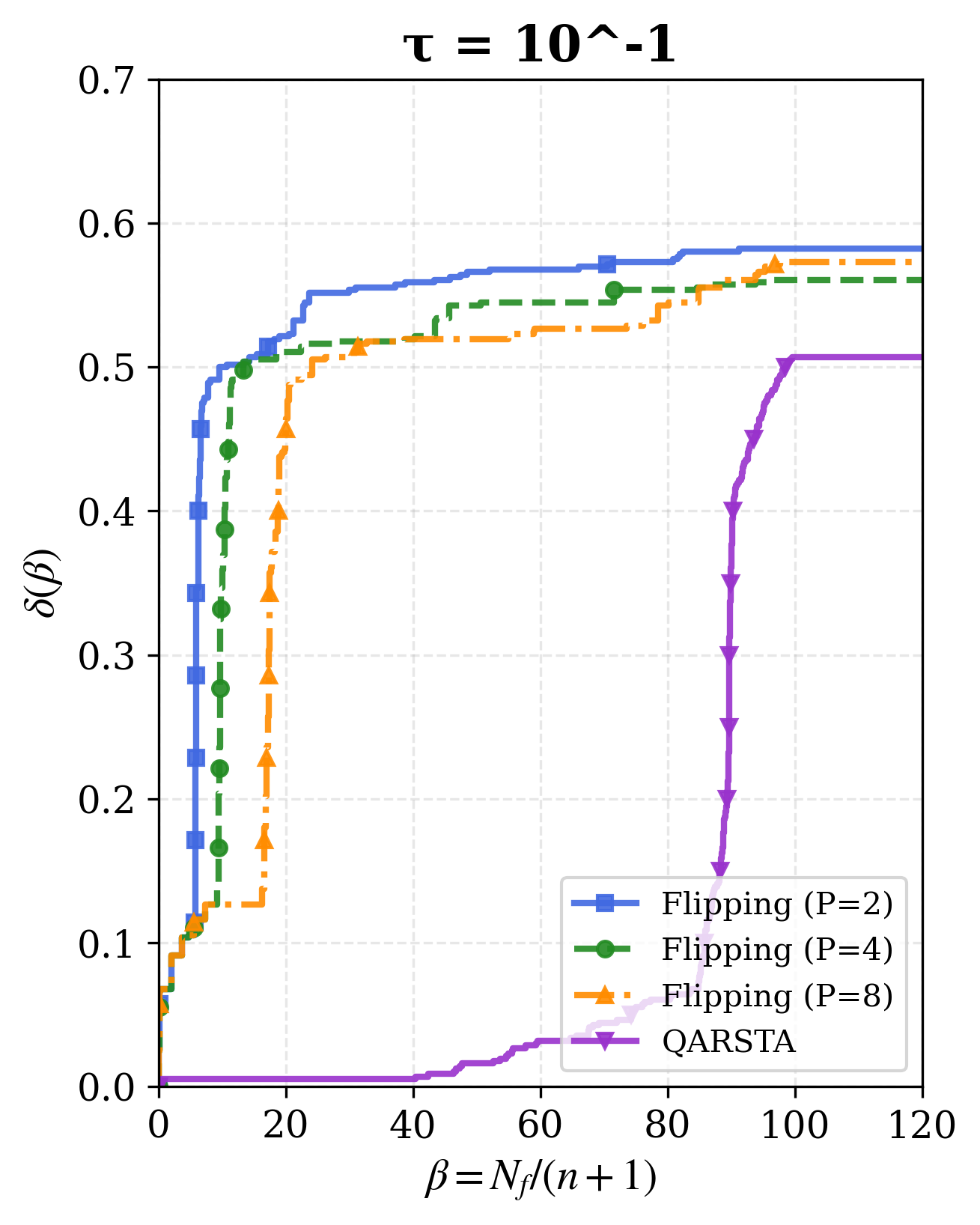}
  \end{minipage}\hfill
  \begin{minipage}{0.40\textwidth}
    \centering
    \includegraphics[width=\linewidth]{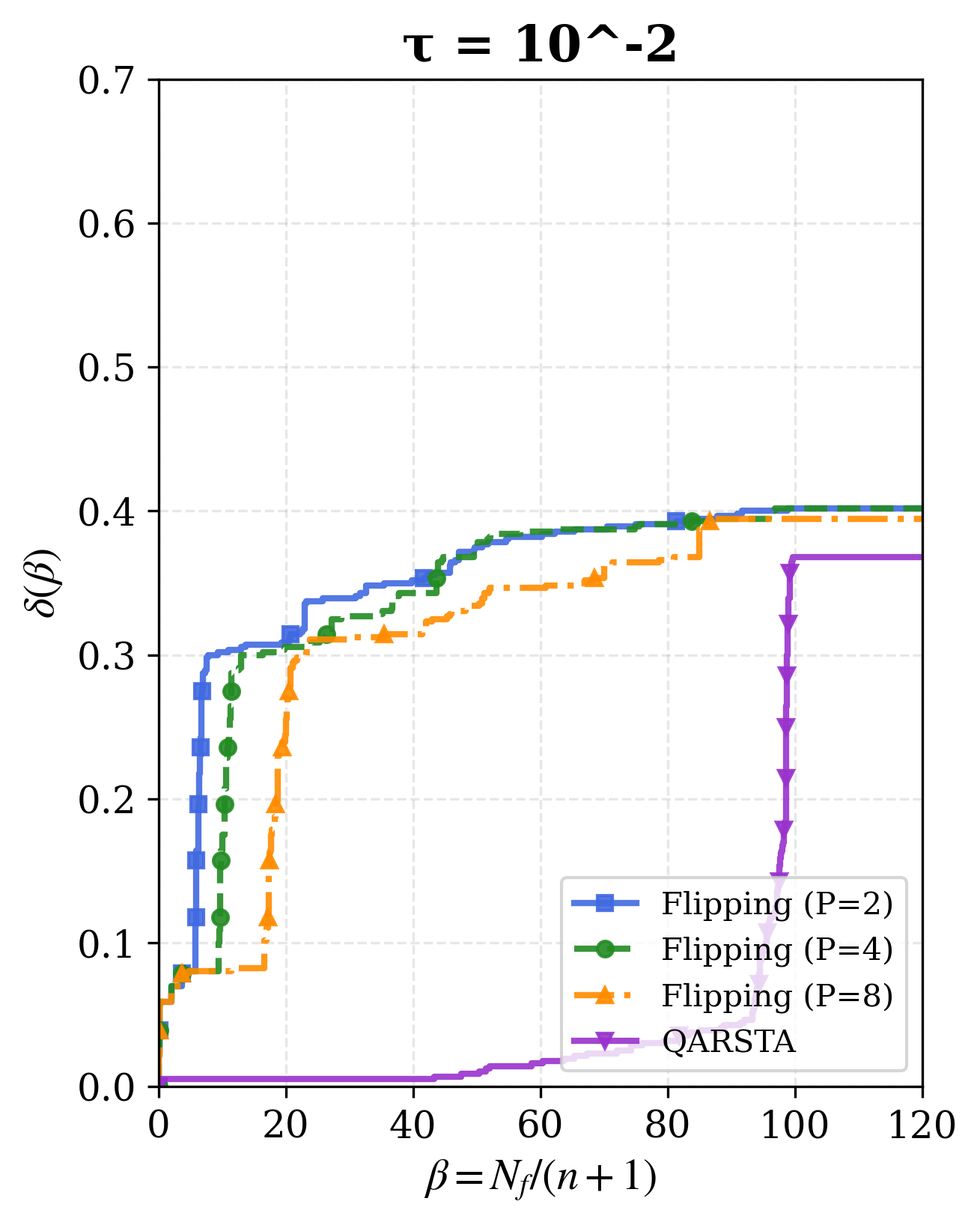}
  \end{minipage}
  \vspace{0.1cm}
  \begin{minipage}{0.40\textwidth}
    \centering
    \includegraphics[width=\linewidth]{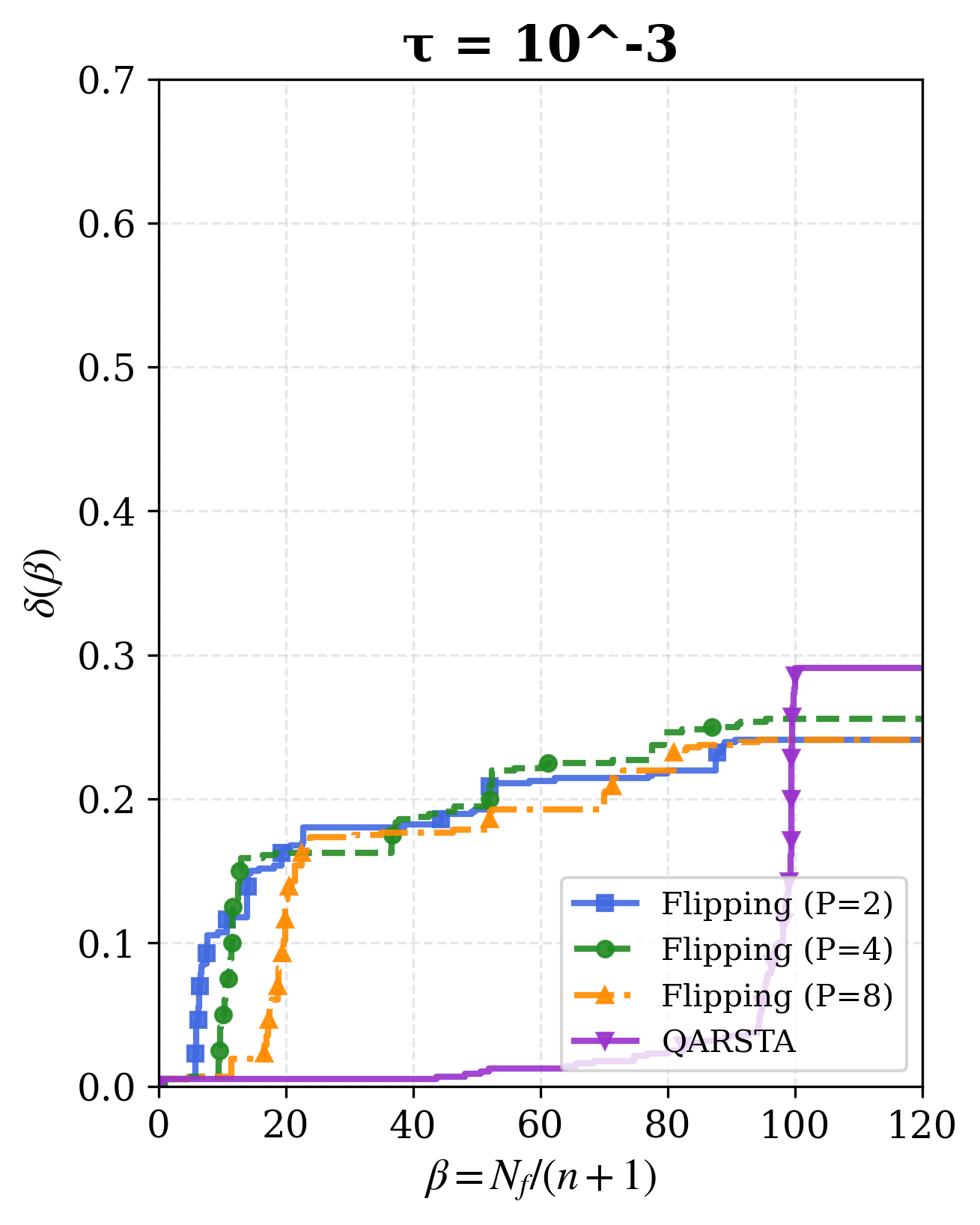}
  \end{minipage}\hfill
  \begin{minipage}{0.40\textwidth}
    \centering
    \includegraphics[width=\linewidth]{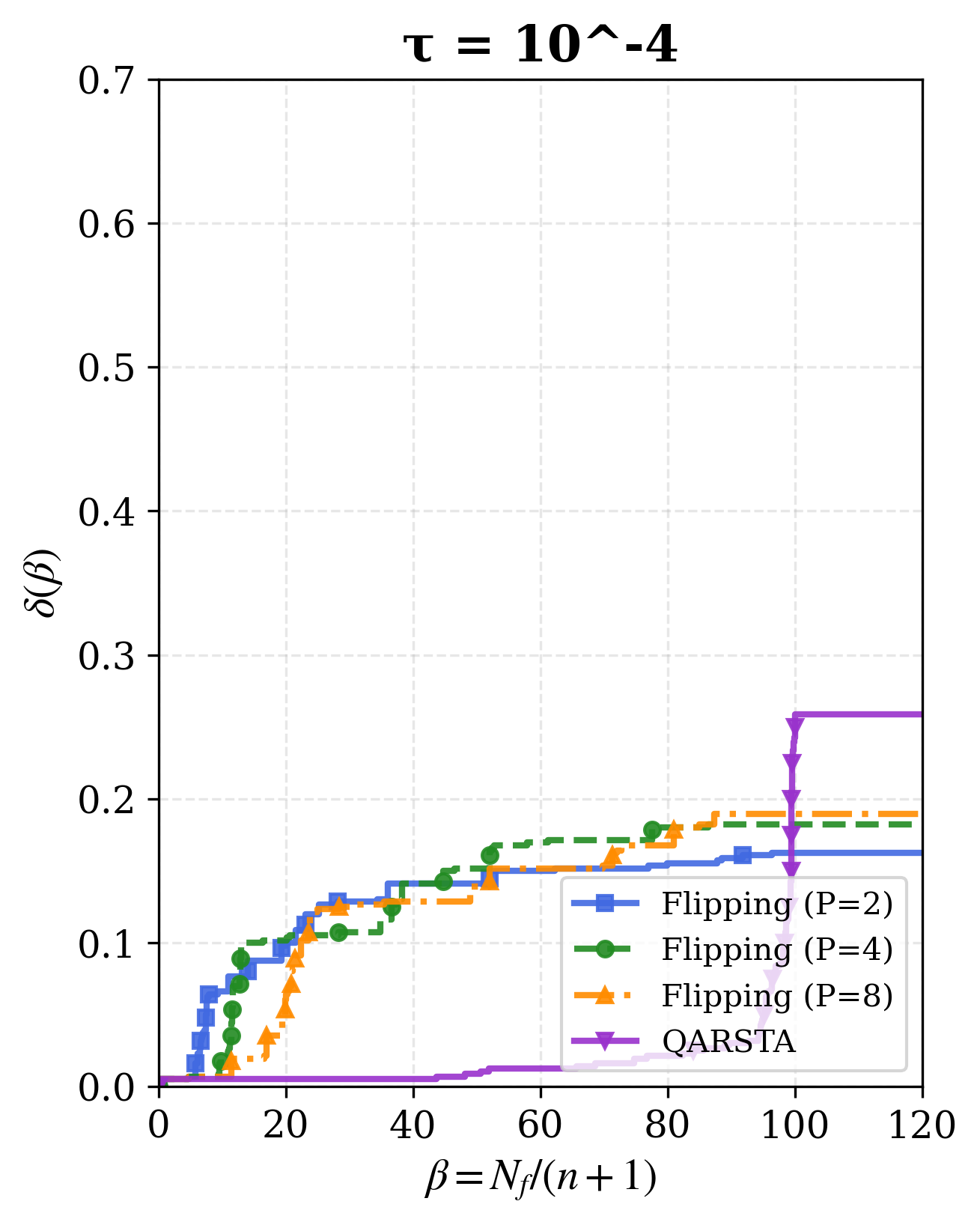}
  \end{minipage}
  \caption{Data profiles $\delta(\beta)$ for the 530 test problems under varying accuracy tolerance levels $\tau$. The x-axis indicates the normalized budget $\beta = N_f/(n+1)$.}
  \label{fig:data_profiles}
\end{figure}

The data profiles in \cref{fig:data_profiles} evaluate the robustness of the algorithms under limited computational budgets. A distinct characteristic of the proposed Flipping mechanisms is their steep initial ascent in the early-budget regime.

For highly restricted normalized budgets ($\beta \le 20$), the success rates of the Flipping algorithms climb rapidly. Even under the stringent condition of $\tau = 10^{-4}$, the Flipping variants quickly solve over $15\%$ of the instances while competing methods show minimal progress. This early-budget efficiency is consistent with the reduced cost of maintaining quadratic models, directing the TCG solver toward promising regions without requiring a prolonged initialization or stabilization phase.

\subsection{Performance Evaluation}
Comparing the proposed framework with baseline QARSTA reveals two distinct convergence behaviors.

QARSTA shows strong asymptotic properties. In high-precision scenarios ($\tau = 10^{-3}, 10^{-4}$), as the budget nears its limit ($\beta \to 100$), the success rate of QARSTA rises sharply, eventually surpassing the Flipping algorithms on this test set. This suggests QARSTA refines local models well when given an ample evaluation budget.

In real-world black-box optimization, however, each function evaluation can correspond to a costly physical simulation, making large budgets impractical. For most of the optimization process ($\beta \le 80$ or $\alpha \le 15$), QARSTA solves few problems in the early stages, while the Flipping algorithm maintains a substantial lead through the early-to-medium budget phases. By trading a modest degree of asymptotic convergence at extreme budgets, the Parallel Flipping mechanism provides a budget-robust solution suited for expensive large-scale optimization tasks.
\section{Conclusion}
\label{sec:discussion}

In this section, we discuss the mathematical advantages, practical performance boundaries, and future directions of the proposed framework.

The key advantage of the flipping mechanism is its algebraic-geometric duality. Traditional parallel derivative-free algorithms operate under a centralized paradigm: the master node decides the evaluation points, while worker nodes only evaluate the objective function in parallel. The master node still bears the sequential $\mathcal{O}(n^3)$ cost of reconstructing the surrogate models.

By contrast, the flipping mechanism supports \textit{decentralized model generation}. A coordinate reflection of the interpolation set preserves geometric poisedness in centered Euclidean trust regions (\cref{thm:poisedness}). This geometric invariance translates into a rank-at-most-two perturbation, letting each worker update the inverse KKT matrix in $\mathcal{O}(n^2)$ time when $m=\mathcal{O}(n)$. A valid surrogate model still requires objective values at the interpolation points used by that worker.

While the $\mathcal{O}(n^2)$ rank-2 update reduces the algebraic bottleneck, practical parallel efficiency on HPC clusters is governed by Amdahl's Law and the computation-to-communication ratio.

For low-dimensional problems ($n \le 100$), solving the KKT system already takes milliseconds. The overhead of MPI network communication (e.g., \texttt{MPI\_Allreduce} and broadcasting dense matrices) dominates the iteration time. In this regime, parallel speedup in wall-clock time is modest; the primary benefit of parallelization is a higher chance of escaping local minima.

In large-scale scenarios ($n \ge 1{,}000$), the centralized $\mathcal{O}(n^3)$ factorization can dominate computing time. The $\mathcal{O}(n^2)$ update may substantially reduce model-maintenance costs, though the net wall-clock benefit depends on objective evaluation cost, communication overhead, and the number of TCG iterations. Our framework includes an inner-loop strategy ($S=10$) to amortize the communication cost: workers perform multiple local trust-region steps and single-point low-rank updates before synchronizing.

The current approach has a structural limitation in its axis selection strategy. The flip axis $t_i$ is drawn uniformly from $\{1, \dots, n\}$. If the selected axis aligns with a flat direction of the objective function where the gradient component is near zero, flipping along this axis yields minimal functional variance and unproductive model exploration.

This limitation suggests several directions for future work. First, gradient-guided flipping: using the gradient $\nabla Q_k$ of the baseline model to bias the axis distribution toward coordinates with the largest absolute gradient components could focus parallel exploration on the steepest local descent directions. Second, integration with random subspaces: as the dimension $n$ scales to large values, flipping along standard coordinate axes may lose efficiency; recent work on expected decrease guarantees for random subspaces \cite{hare2024expected} and model-driven subspace reduction \cite{he2025modeldrivensubspaceslargescaleoptimization} suggests projecting the problem onto randomly generated orthogonal bases and applying the flipping mechanism within these lower-dimensional subspaces.

\label{sec:conclusions}

In this paper, we addressed the scalability bottleneck of model-based derivative-free optimization. Traditional solvers, limited by the $\mathcal{O}(m^3)$ linear algebra required to maintain underdetermined quadratic models, struggle to exploit modern parallel architectures. We introduced the Parallel Flipping Mechanism, which exploits the inner-product structure of the minimum Frobenius norm updating KKT system.

Our contributions span algebraic efficiency and geometric model management. We proved that reflecting interpolation geometry along a coordinate axis induces a rank-at-most-two perturbation on the KKT matrix. The Sherman-Morrison-Woodbury identity reduces the inverse KKT update to $\mathcal{O}(n^2)$ operations when $m=\mathcal{O}(n)$. We showed that this flipping operation is an isometry in centered Euclidean trust regions, preserving the poisedness constant. Together with standard fully linear model-management assumptions and valid objective values at the interpolation points, this supports first-order global convergence.

We integrated this mechanism with a Truncated Conjugate Gradient (TCG) subproblem solver in a decentralized Master-Worker topology. Numerical experiments on 530 benchmark problems show strong early-budget performance relative to QARSTA on the tested instances.

This work demonstrates how algebraic and geometric structure can be used to reduce sequential bottlenecks in model-based DFO. The proposed framework offers a theoretically grounded and scalable approach for large-scale, computationally expensive black-box optimization in distributed environments.

\bibliographystyle{plain}

\end{document}